\theoremstyle{plain}
\newtheorem{teor}{Theorem}
\numberwithin{teor}{section}
\numberwithin{equation}{section}
\theoremstyle{definition}
\newaliascnt{defi}{teor}
\newtheorem{defi}[defi]{Definition}
\theoremstyle{plain}
\newaliascnt{lemma}{teor}%
\newtheorem{lemma}[lemma]{Lemma}
\theoremstyle{plain}
\newaliascnt{prop}{teor}%
\newtheorem{prop}[prop]{Proposition}
\theoremstyle{plain}
\newaliascnt{cor}{teor}%
\theoremstyle{definition}
\newaliascnt{ex}{teor}%
\theoremstyle{remark}
\newaliascnt{oss}{teor}%
\newtheorem{oss}[oss]{Remark}
\DeclarePairedDelimiter{\abs}{\lvert}{\rvert}
\DeclarePairedDelimiter{\norma}{\lVert}{\rVert}
\DeclareMathOperator{\sbv}{SBV}
\DeclareMathOperator{\bv}{BV}
\newcommand{\marcomm}[1]{\marginpar{\begin{flushright}#1\end{flushright}}}%
\newcommand{\R}{\mathbb{R}}
\newcommand{\N}{\mathbb{N}}
\newcommand{\Ln}{\mathcal{L}^n}
\newcommand{\Hn}{\mathcal{H}^{n-1}}
\newcommand{\eps}{\varepsilon}
\DeclareMathOperator{\divv}{div}
\DeclareMathOperator{\loc}{loc}
\DeclareMathOperator{\supp}{supp}
\newcommand{\leqnomode}{\tagsleft@true\let\veqno\@@leqno}
\newcommand{\reqnomode}{\tagsleft@false\let\veqno\@@eqno}
\title{A free boundary problem for the $p$-Laplacian with nonlinear boundary conditions}
\author{P. Acampora, E. Cristoforoni }
\begin{document}
\renewcommand*{\sectionautorefname}{Section}
\newcommand{\Addresses}{{%
 \bigskip 
 \footnotesize 
 
 \textsc{Dipartimento di Matematica e Applicazioni ``R. Caccioppoli'', Universit\`a degli studi di Napoli Federico II, Via Cintia, Complesso Universitario Monte S. Angelo, 80126 Napoli, Italy.}\par\nopagebreak 
 
 \medskip 
 
 \textit{E-mail address}, P.~Acampora: \texttt{paolo.acampora@unina.it} 
 \medskip 
 
\textsc{Mathematical and Physical Sciences for Advanced Materials and Technologies, Scuola Superiore Meridionale, Largo San Marcellino 10, 80126, Napoli, Italy.}\par\nopagebreak 
 
 \medskip 
 
 \textit{E-mail address}, E.~Cristoforoni: \texttt{emanuele.cristoforoni@unina.it} 
}} 
\reversemarginpar
\maketitle
\begin{abstract}
We study a nonlinear generalization of a free boundary problem that arises in the context of thermal insulation. We consider two open sets $\Omega\subseteq A$, and we search for an optimal $A$ in order to minimize a non-linear energy functional, whose minimizers $u$ satisfy the following conditions: $\Delta_p u=0$ inside $A\setminus\Omega$, $u=1$ in $\Omega$, and a nonlinear Robin-like boundary $(p,q)$-condition on the free boundary $\partial A$. We study the variational formulation of the problem in $\sbv$, and we prove that, under suitable conditions on the exponents $p$ and $q$, a minimizer exists and its jump set satisfies uniform density estimates.

\textsc{MSC 2020:} 35A01, 35J66, 35J92, 35R35.

\textsc{Keywords:} p-Laplacian, free boundary, Robin.
\end{abstract}
\section{Introduction}
Let $\Omega\subseteq\R^n$ be a bounded open set with smooth boundary, and let $A$ be a set containing $\Omega$. Consider the functional
\begin{equation}\label{0}
F(A,v)=\int_{A}\abs{\nabla v}^2\,d\Ln+\beta\int_{\partial A}v^2\,d\Hn+C_0\Ln(A),
\end{equation}
with $v\in H^1(A)$, $v=1$ in $\Omega$ and $\beta, C_0>0$ fixed positive constants. The problem of minimizing this functional arises in the environment of thermal insulation: $F$ represents the energy of a heat configuration $v$ when the temperature is maintained constant inside the body $\Omega$ and there's a bulk layer $A\setminus \Omega$ of insulating material whose cost is represented by $C_0$ and the heat transfer with the external environment  is conveyed by convection. For simplicity's sake in the following we will set $C_0=1$. The variational formulation in \eqref{0} leads to an Euler-Lagrange equation, which is the weak form of the following problem:
\begin{equation}\label{eq0}\begin{cases}
\Delta u=0 & \text{in } A\setminus\Omega, \\[3 pt]
\dfrac{\partial u}{\partial \nu} +\beta u=0 & \text{on } \partial A,\\[4 pt]
u= 1 & \text{in }\Omega,
\end{cases}\end{equation}\medskip

The problems we are interested in concern the existence of a solution and its regularity. In this sense, one could be interested in studying a more general setting in which it is possible to consider possibly irregular sets $A$. Specifically, we could generalize the problem into the context of $\sbv$ functions, aiming to minimize the functional
\[
F(v)=\int_{\R^n}\abs{\nabla v}^2\, d\Ln+\beta\int_{J_v}\left(\underline{v}^2+\overline{v}^2\right)\,d\Hn+\Ln(\set{v>0}\setminus\Omega)
\]
with $v\in\sbv(\R^n)$ and $v=1$ in $\Omega$. This problem has been studied by L. A. Caffarelli and D. Kriventsov in \cite{CK}, where the authors have proved the existence of a solution $u$ for the problem and the regularity of its jump set. Another similar problem, in a non-linear context, has been deepened by D. Bucur and A. Giacomini in \cite{BucGia15} with a boundedness constraint. \medskip

In this paper, our main aim is to generalize the problem and  techniques employed in \cite{CK} to a nonlinear formulation. In detail, for $p,q>1$ fixed, we consider the functional
\begin{equation}\label{funzionale}
\mathcal{F}(v)=\int_{\R^n}\abs{\nabla v}^p\,d\Ln+\beta\int_{J_v}\left(\underline{v}^q+\overline{v}^q\right)\, d\Hn +\Ln(\set{v>0}\setminus\Omega),
\end{equation}
and in the following we are going to study the problem
\[%
\inf\Set{\mathcal{F}(v) | \begin{gathered}
v\in\sbv(\R^n) \\
v(x)=1 \text{ in }\Omega 
\end{gathered}}.
\]%
Notice that if $v\in\sbv(\R^n)$ with $v=1$ a.e. in $\Omega$, letting $v_0=\max\set{0,\min\set{v,1}}$ we have that $v_0\in\sbv(\R^n)$ with $v_0=1$ a.e. in $\Omega$ and $\mathcal{F}(v_0)\le\mathcal{F}(v)$ so it suffices to consider the problem
\begin{equation}
\label{problema}
\inf\Set{\mathcal{F}(v) | \begin{gathered}
v\in\sbv(\R^n),\\ v(x)\in[0,1] \,\Ln\text{-a.e.}, \\
v(x)=1 \text{ in }\Omega 
\end{gathered}}.
\end{equation}
In a more regular setting, problem \eqref{problema} can be seen as a PDE. Let us fix $\Omega,A$ sufficiently smooth open sets, $u\in W^{1,p}(A)$ with $u=1$ on $\Omega$, and let us define the functional
\begin{equation}\label{eq: 0}
F(u,A)=\int_\Omega \abs{\nabla u}^p\, d\Ln+\beta\int_{\partial\Omega}\abs{u}^q\,d\Hn + \Ln({A\setminus\Omega}).
\end{equation}
minimizers u to~\eqref{eq: 0} solve the following boundary value problem 
\begin{equation}\label{eq1}
\begin{dcases}
\vphantom{\Big(}\divv\left(\abs{\nabla u}^{p-2}\nabla u\right)=0 \qquad \qquad&\text{in $A\setminus\Omega$,}\\[7 pt]
\vphantom{\bigg(}\abs{\nabla u}^{p-2}\frac{\partial u}{\partial \nu}+\beta\frac{q}{p}\abs{u}^{q-2}u=0 &\text{on $\partial A$,}\\[7 pt]
u=1 &\text{in $\Omega$.}
\end{dcases}
\end{equation}
In \autoref{notation} we give some preliminary tools and definitions, and then we will prove the existence of a minimizer $u$ of \eqref{problema}, under a prescribed condition on $p$ and $q$. Finally, we will prove density estimates for the jump set $J_u$. \medskip

We resume in the following theorems the main results of this paper.
\begin{teor}
\label{teor: main1}
Let $\Omega\subseteq\R^n$ be a bounded open set, and let $p,q>1$ be exponents satisfying one of the following conditions:
\begin{itemize}
\item{ $1<p<n$, and 
$1<q<\dfrac{p(n-1)}{n-p}:=p_*$;}
\item{$n\le p<\infty$, and $1<q<\infty$.}
\end{itemize}
Then there exists a solution $u$ to problem \eqref{problema} and there exists a constant $\delta_0=\delta_0(\Omega,\beta,p,q)>0$ such that
\begin{equation}\label{eq: lower bound0}
u>\delta_0\end{equation}
$\Ln$-almost everywhere in $\set{u>0}$, and there exists $\rho(\delta_0)>0$ such that
\[\supp u\subseteq B_{\rho(\delta_0)}.
\]
\end{teor}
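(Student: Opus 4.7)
The plan is to apply the direct method of the calculus of variations. For any minimizing sequence $\set{u_k}$ of \eqref{problema}, the bound $\mathcal{F}(u_k)\le\Lambda:=\inf\mathcal{F}+1<\infty$ yields $\norma{\nabla u_k}_{L^p(\R^n)}^p\le\Lambda$, while $u_k\in[0,1]$ is preserved, so the bulk $W^{1,p}$ estimate is automatic. The obstruction to Ambrosio's $\sbv$-compactness theorem is the lack of a uniform bound on $\Hn(J_{u_k})$: the jump integral $\int_{J_{u_k}}(\underline u_k^q+\overline u_k^q)\,d\Hn$ degenerates on jumps whose traces are close to zero, and the translation-invariance of $\mathcal{F}$ outside $\Omega$ permits mass to escape to infinity. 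The strategy is to modify each $u_k$ in two steps, neither of which raises the energy: first in value, to obtain $u_k>\delta_0$ on $\set{u_k>0}$, and then in space, to confine $\supp u_k$ to a fixed ball.

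For the \emph{value truncation}, given admissible $v$ and $\delta>0$, the coarea-type estimate
\[
\int_0^{\delta}\Hn(\partial^*\set{v>s}\setminus J_v)\,ds\le\int_{\set{0<v<\delta}}\abs{\nabla v}\,d\Ln
\]
furnishes $t^*\in(0,\delta)$ with $\Hn(\partial^*\set{v>t^*}\setminus J_v)\le\delta^{-1}\int_{\set{0<v<\delta}}\abs{\nabla v}\,d\Ln$. Setting $\tilde v:=v\,\chi_{\set{v>t^*}}$ destroys all old jumps of $v$ lying inside $\set{v<t^*}$ (a saving), discards the gradient energy on $\set{0<v<\delta}$ and the volume $\Ln(\set{0<v<\delta}\setminus\Omega)$, and creates new jumps of height at most $\delta$ on $\partial^*\set{v>t^*}\setminus J_v$ of total cost at most $\beta\delta^{q-1}\int_{\set{0<v<\delta}}\abs{\nabla v}\,d\Ln$. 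A Hölder-then-Young estimate absorbs this overhead into the gradient and volume savings provided $\delta\le\delta_0(\beta,p,q)$; the hypothesis $q>1$ is essential so that $\delta^{q-1}\to 0$. The exponent condition $q<p_*$ (respectively $p\ge n$) enters subsequently via the Sobolev trace inequality, which ensures the surface $L^q$-energy on any set of finite perimeter is controlled by the bulk $W^{1,p}$ norm and makes the estimate uniform along the minimizing sequence.

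For the \emph{spatial truncation}, $\mathcal{F}(\tilde v)<\infty$ forces $\Ln(\set{\tilde v>0})<\infty$, so radial slicing provides, for every large $R$, an intermediate radius $r\in[R,2R]$ satisfying
\[
\Hn(\partial B_r\cap\set{\tilde v>0})\le R^{-1}\Ln\bigl(\set{\tilde v>0}\cap(B_{2R}\setminus B_R)\bigr).
\]
Replacing $\tilde v$ by $\tilde v\,\chi_{B_r}$ adds new jumps costing at most $\beta R^{-1}\Ln(\set{\tilde v>0}\cap(B_{2R}\setminus B_R))$ (since $\tilde v\le 1$) and saves at least $\Ln(\set{\tilde v>0}\setminus B_{2R})$ in volume; because the tails of $\Ln(\set{\tilde v>0}\cap B_R^c)$ vanish as $R\to\infty$, iterating this cut confines $\supp\tilde v\subseteq B_{\rho}$ for some $\rho=\rho(\delta_0,\beta,\Lambda)$.

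Applied to the minimizing sequence, these two modifications yield $\set{u_k}$ bounded in $W^{1,p}(B_\rho)$, supported in $B_\rho$, and with $\Hn(J_{u_k})\le\Lambda/(\beta\delta_0^q)$, so Ambrosio's $\sbv$-compactness theorem produces a subsequential $L^1$-limit $u\in\sbv(\R^n)$ satisfying $u\in[0,1]$ and $u=1$ in $\Omega$. Lower semicontinuity of the three terms of $\mathcal{F}$---gradient by weak $L^p$ lower semicontinuity, jump by Ambrosio's theorem on $\sbv$ surface energies applied to the density $(a,b)\mapsto a^q+b^q$, and volume by Fatou on $\set{u_k>0}$---identifies $u$ as a minimizer, and the pointwise bound $u>\delta_0$ on $\set{u>0}$ together with $\supp u\subseteq B_\rho$ pass to the $L^1$-limit. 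The main technical obstacle is ensuring the universality of $\delta_0$ and the compatibility of the value-truncation with the subsequent lower semicontinuity; this is where the threshold $q<p_*$, respectively $p\ge n$, is sharp, as it is exactly the exponent for which the surface energy becomes subordinate to the bulk one via the Sobolev trace embedding.
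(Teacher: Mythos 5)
There is a genuine gap, and it sits exactly where the paper has to work hardest: your \emph{value truncation} step does not deliver a uniform lower bound, and the energy comparison you sketch for it does not close. First, the level $t^*$ you select lies anywhere in $(0,\delta)$, so the truncated function only satisfies $\tilde v>t^*$ on its support, with $t^*$ possibly arbitrarily small along the sequence; this does not produce a universal $\delta_0$, which is the whole content of \eqref{eq: lower bound0}. Second, the absorption argument is not valid as stated: the new jump cost is controlled by $\beta\delta^{q-1}\int_{\set{0<v<\delta}}\abs{\nabla v}\,d\Ln$, i.e.\ by quantities living on $\set{0<v<\delta}$, whereas the savings you propose to absorb it into (discarded gradient energy and volume) live only on the smaller set $\set{0<v\le t^*}$; there is no reason these are comparable. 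Third, the exponent condition $q<p_*$ (resp.\ $p\ge n$) never actually enters your computation --- a Hölder--Young step with $\delta^{q-1}\to0$ uses only $q>1$ --- so the argument cannot be capturing the real mechanism. The paper's Theorem~\ref{teor: lower-bound} replaces the single mean-value selection by a differential inequality for $f(t)=\int_{\set{u\le t}\setminus J_u}u^{q-1}\abs{\nabla u}\,d\Ln$: comparing with $u\chi_{\set{u>t}}$ for \emph{every} $t$, splitting by Hölder with the exponent $\gamma=\tfrac{q1^*}{(q-1)p'}>1$ (this is precisely where $q'/p'>1-1/n$, i.e.\ $q<p_*$, is used) and applying the $\bv\hookrightarrow L^{n/(n-1)}$ embedding to $(u\chi_{\set{u\le t}})^q$, one gets $f(t)\le C(tf'(t))^{1+1/(nq')}$, whose integration forces $f\equiv0$ below a universal $\delta$. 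This argument is applied to \emph{inward minimizers}, which are manufactured from the minimizing sequence by an auxiliary inner minimization over sets $A$ (Proposition~\ref{prop:H_0H_a}), with an $O(a_k^{q-1})$ error controlled by a mean-value selection similar in spirit to yours --- but the selection is used only to make the error small, never to claim the truncation is energy-decreasing.

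The \emph{spatial truncation} has the same flavour of problem: your radial slicing shows that cutting at a good radius $r\in[R,2R]$ increases the energy by at most $\beta R^{-1}\Ln(\set{\tilde v>0})=O(1/R)$, which proves that the tails of $\Ln(\set{\tilde v>0}\cap B_R^c)$ are small but not that the support is eventually empty; iterating the cut does not improve this to compact support in a fixed ball $B_\rho$. The paper obtains the support bound differently (Proposition~\ref{cor: density0}): from $u\ge\delta$ on $\set{u>0}$ one first gets the upper density bound $\Hn(J_u\cap B_r(x))\le Cr^{n-1}$, then the inclusion $\partial^*\set{u>0}\subseteq J_u$ and the isoperimetric inequality give the differential inequality $\mu(r)\le K\mu'(r)^{n/(n-1)}$ for $\mu(r)=\Ln(B_r(x)\cap\set{u>0})$, hence $\mu(r)\ge C_1r^n$ at every point of $K_u$; combined with the global bound $\Ln(\set{u>0})\le2\tilde{\mathcal F}$ this confines the support to $B_{\rho(\delta_0)}$ with $\rho(\delta_0)=\delta_0^{1-q}$. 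Your compactness and lower-semicontinuity endgame is fine once the sequence lies in $H_{\delta_0}$, but the two truncation lemmas you rely on to get there need to be replaced by the ODE argument and the density estimate, respectively.
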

\begin{teor}
\label{teor: main2}
Let $\Omega\subseteq\R^n$ be a bounded open set, and let $p,q>1$ be exponents satisfying the assumptions of \autoref{teor: main1}. Then there exist positive constants $C(\Omega,\beta,p,q)$, $c(\Omega,\beta,p,q)$, $C_1(\Omega,\beta,p,q)$ such that if $u$ is a minimizer to problem \eqref{problema}, then
\[
c\,r^{n-1}\le\Hn(J_u\cap B_r(x))\le C\, r^{n-1},
\]
and
\[
\Ln(B_r(x)\cap\set{u>0})\ge C_1\,r^n,
\]
for every $x\in\overline{J_u}$ with $B_r(x)\subseteq \R^n\setminus \Omega$.

In particular, this implies the essential closedness of the jump set $J_u$, namely
\[
\Hn(\overline{J_u}\setminus J_u)=0.
\]
\end{teor}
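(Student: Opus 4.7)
The plan is to follow the De Giorgi--Carriero--Leaci strategy adapted to this Robin-type free boundary problem as in~\cite{CK}, exploiting the uniform positivity $u>\delta_0$ on $\set{u>0}$ from \autoref{teor: main1} so that every jump contributes at least $\delta_0^q$ to the surface energy.

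For the \emph{upper bound} on $\Hn(J_u\cap B_r(x))$, I would test minimality against the competitor $v:=u\chi_{\R^n\setminus B_r(x)}$, admissible because $B_r(x)\subseteq\R^n\setminus\Omega$. The change in energy removes the Dirichlet, jump and volume contributions inside $B_r(x)$ and introduces at most a new jump on $\partial B_r(x)$ of energy $\le\beta\Hn(\partial B_r(x))$, since $u\le 1$. Using $\underline u^q+\overline u^q\ge\delta_0^q$ on $J_u$ from \eqref{eq: lower bound0}, the inequality $\mathcal{F}(u)\le\mathcal{F}(v)$ immediately gives the desired $Cr^{n-1}$ bound.

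The \emph{lower bound} on $\Hn(J_u\cap B_r(x))$ is the heart of the proof and follows from a density/decay lemma: there exist $\eps_0,\theta_0\in(0,1)$ such that if $\Hn(J_u\cap B_r(x))<\eps_0 r^{n-1}$ for $B_r(x)\subseteq\R^n\setminus\Omega$, then $\Hn(J_u\cap B_{\theta_0 r}(x))=0$. My plan is to argue this by contradiction and blow-up: from a sequence $(u_k,x_k,r_k)$ of counterexamples, the rescalings $\tilde u_k(y):=u_k(x_k+r_k y)$ are almost-minimizers on $B_1(0)$ whose jump energies vanish. Compactness in $\sbv$ extracts a limit which is a minimizer of the limiting $p$-Dirichlet problem, hence $p$-harmonic, H\"older continuous, and in particular with no jumps in the interior, contradicting the persistence of jumps along the sequence (guaranteed by the threshold $\delta_0$, which prevents jump amplitudes from collapsing to zero). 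The contrapositive at points of $J_u$, extended by approximation to $\overline{J_u}$, then yields the lower density. The main obstacle here is the compactness step: both the minimality condition and the uniform lower bound $\tilde u_k>\delta_0$ on the positivity sets must be transported to the limit, and one must rule out concentration of the $u^q$-jump energy, which behaves less rigidly than the plain length measure.

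For the \emph{lower bound on the support}, I fix $x\in\overline{J_u}$ and, using Fubini, I pick $r'\in(r/2,r)$ with
\[
\int_{\partial B_{r'}(x)}u^q\,d\Hn\le\Hn\bigl(\partial B_{r'}(x)\cap\set{u>0}\bigr)\le\frac{2}{r}\Ln\bigl(\set{u>0}\cap B_r(x)\bigr).
\]
Testing minimality against $v:=u\chi_{\R^n\setminus B_{r'}(x)}$ and using $\underline u^q+\overline u^q\ge\delta_0^q$ on $J_u$ again, the resulting inequality rearranges to
\[
\Ln\bigl(\set{u>0}\cap B_{r/2}(x)\bigr)+\beta\delta_0^q\Hn\bigl(J_u\cap B_{r/2}(x)\bigr)\le\beta\int_{\partial B_{r'}(x)}u^q\,d\Hn;
\]
combining with the just-established lower density of $J_u$ gives $\Ln(\set{u>0}\cap B_r(x))\ge C_1 r^n$. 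Finally, the essential closedness $\Hn(\overline{J_u}\setminus J_u)=0$ is a standard consequence of Federer's density theorem: the lower density bound for $J_u$ guarantees strictly positive upper density of $\Hn\llcorner J_u$ at every point of $\overline{J_u}$, while this measure vanishes on $\overline{J_u}\setminus J_u$, forcing that set to be $\Hn$-negligible.
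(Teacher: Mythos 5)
Your skeleton matches the paper's: the upper bound comes from testing minimality against $u\chi_{\R^n\setminus B_r(x)}$ together with $\underline{u}^q+\overline{u}^q\ge\delta_0^q$ on $J_u$, the jump lower bound comes from a De Giorgi--Carriero--Leaci decay lemma proved by blow-up, and essential closedness follows from the density characterization of $J_u$. Your route to $\Ln(B_r(x)\cap\set{u>0})\ge C_1 r^n$ is genuinely different and is correct as far as it goes: the paper obtains it already in \autoref{cor: density0} from the isoperimetric inequality applied to $\mu(\rho)=\Ln(B_\rho(x)\cap\set{u>0}^{(1)})$ and the inclusion $\partial^*\set{u>0}\subseteq J_u$, giving $\mu\le K(\mu')^{n/(n-1)}$; this is needed \emph{before} the existence proof (to get compact support of inward minimizers), whereas your derivation via a good slice $r'$ and the jump lower bound is shorter but only available a posteriori for minimizers, which suffices for \autoref{teor: main2}.

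The genuine gap is in the proof mechanism you propose for the decay lemma. The contradiction you invoke --- ``persistence of jumps along the sequence, guaranteed by the threshold $\delta_0$'' --- cannot work: under the contradiction hypothesis the rescaled functions satisfy $\Hn(J_{v_k}\cap B_1)=\eps_k\to 0$, so the jump sets vanish in measure in the blow-up regardless of their amplitude; $\delta_0$ controls jump heights, not $\Hn$-measure, and the qualitative statement ``$J_{u_k}\cap B_{\theta_0 r_k}\ne\emptyset$'' does not pass to an $L^1$/SBV limit. The contradiction must instead be quantitative, and this forces two ingredients you omit. First, the blow-ups must be normalized by the local energy, $v_k(x)=u_k(r_kx)/E_k^{1/p}$ with $E_k=r_k^{p-n}\mathcal{F}(u_k;B_{r_k})$ (plus the Poincar\'e--Wirtinger truncation of \autoref{lem: PW}); without this normalization the unrescaled limit of $u_k(x_k+r_k\,\cdot)$ is typically constant and carries no information, and transferring minimality to the limit requires separating the cases $\limsup_k E_k>0$ and $E_k\to0$ as in Step 3 of \autoref{lem: density}. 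Second, the contradiction is between the assumed failure of decay, $\alpha(\tau)=\lim_k\mathcal{F}(u_k;B_{\tau r_k})/\mathcal{F}(u_k;B_{r_k})\ge\tau^{n-1/2}$, and the Lipschitz regularity of the $p$-harmonic limit, which forces $\int_{B_\tau}\abs{\nabla v}^p\,d\Ln\le C\tau^n$. Finally, after iterating the decay one only gets $r^{1-n}\mathcal{F}(u;B_r(x))\to0$; concluding $x\notin J_u$ from this requires the separate fact that vanishing $(n-1)$-density of $\abs{\nabla u}^p\Ln+\Hn$ restricted to $J_u$ excludes membership in $J_u$ (\cite[Theorem 3.6]{Giorgi}) --- this is what replaces your unproved claim that $\Hn(J_u\cap B_{\theta_0 r}(x))=0$.
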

In \autoref{section: lower} we prove that the a priori estimate~\eqref{eq: lower bound0} holds for inward minimizers (see \autoref{defi: inward}), such an estimate will be crucial in the   proof of \autoref{teor: main1} in \autoref{section: exist}. Finally, in \autoref{section: density} we prove \autoref{teor: main2}.
\begin{oss}
Notice that the condition on the exponents is undoubtedly verified when $p\ge q>1$.
Furthermore, if $\Omega$ is a set with Lipschitz boundary, the exponent $p_*$ is the optimal exponent such that
\[
W^{1,p}(\Omega)\subset\subset L^q(\partial\Omega) \qquad \forall q\in[1,p_*).
\]
\end{oss}

\section{Notation and Tools}\label{notation}
In this section, we give the definition of the space $\sbv$, and some useful notations and results that we will use in the following sections. We refer to \cite{bv}, \cite{evans}, \cite{robin-bg} for a more intensive study of these topics.
\begin{defi}[$\bv$]
Let $u\in L^1(\R^n)$. We say that $u$ is a function of \emph{bounded variation} in $\R^n$ and we write $u\in\bv(\R^n)$ if its distributional derivative is  a Radon measure, namely
\[
\int_{\Omega}u\,\frac{\partial\varphi}{\partial x_i}=\int_{\Omega}\varphi\, d D_i u\qquad \forall \varphi\in C^\infty_c(\R^n),
\]
with $Du$ a $\R^n$-valued measure in $\R^n$. We denote with $\abs{Du}$ the total variation of the measure $Du$. The space $\bv(\R^n)$ is a Banach space equipped with the norm
\[
\norma{u}_{\bv(\R^n)}=\norma{u}_{L^1(\R^n)}+\abs{Du}(\R^n).
\]

\end{defi}
\begin{defi}
Let $E\subseteq\R^n$ be a measurable set. We define the \emph{set of points  of density 1 for $E$} as 
\[
E^{(1)}=\Set{x\in\R^n | \lim_{r\to0^+}\dfrac{\Ln(B_r(x)\cap E)}{\Ln(B_r(x))}=1},
\]
and the \emph{set of points of density 0 for $E$} as 
\[
E^{(0)}=\Set{x\in\R^n | \lim_{r\to0^+}\dfrac{\Ln(B_r(x)\cap E)}{\Ln(B_r(x))}=0}.
\]
Moreover, we define the \emph{essential boundary} of $E$ as
\[
\partial^*E=\R^n \setminus(E^{(0)}\cup E^{(1)}).
\]
\end{defi}
\begin{defi}[Approximate upper and lower limits]
Let $u\colon\R^n\to\R$ be a measurable function. We define the \emph{approximate upper and lower limits} of $u$, respectively, as
\[\overline{u}(x)=\inf\Set{t\in\R|\limsup_{r\to0^+}\dfrac{\Ln(B_r(x)\cap\set{u>t})}{\Ln(B_r(x))}=0},\]
and
\[\underline{u}(x)=\sup\Set{t\in\R|\limsup_{r\to0^+}\dfrac{\Ln(B_r(x)\cap\set{u<t})}{\Ln(B_r(x))}=0}.\]
We define the \emph{jump set} of $u$ as 
\[J_u=\Set{x\in\R^n|\underline{u}(x)<\overline{u}(x)}.\] 
We denote by $K_u$ the closure of $J_u$. 
\end{defi} 
If $\overline{u}(x)=\underline{u}(x)=l$, we say that $l$ is the approximate limit of $u$ as $y$ tends to $x$, and we have that, for any $\eps>0$, 
\[\limsup_{r\to0^+}\dfrac{\Ln(B_r(x)\cap\set{\abs{u-l}\geq\eps)}}{\Ln(B_r(x))}=0.\]

If $u\in\bv(\R^n)$, the jump set $J_u$ is a $(n-1)$-rectifiable set, i.e. ${J_u\subseteq\bigcup_{i\in\mathbb{N}}M_i}$, up to a $\Hn$-negligible set, with $M_i$ a $C^1$-hypersurface in $\R^n$ for every $i$. We can then define $\Hn$-almost everywhere on $J_u$ a normal $\nu_u$ coinciding with the normal to the hypersurfaces $M_i$. Furthermore, the direction of $\nu_u(x)$ is chosen in such a way that the approximate upper and lower limits of $u$ coincide with the approximate limit of $u$ on the half-planes
\[H^+_{\nu_u}=\set{y\in\R^n|\nu_u(x)\cdot(y-x)\geq0}\]
and
\[H^-_{\nu_u}=\set{y\in\R^n|\nu_u(x)\cdot(y-x)\leq0}\]
respectively.
\begin{defi}
Let $\Omega\subseteq\R^n$ be an open set, and $E\subseteq\R^n$ a measurable set. We define the \emph{relative perimeter} of $E$ inside $\Omega$ as
\[
P(E;\Omega)=\sup\Set{\int_E \divv\varphi\,d\Ln | \begin{aligned}
\varphi\in &\:C^1_c(\Omega,\R^n) \\ &\abs{\varphi}\le 1
\end{aligned}}.
\]
If $P(E;\R^n)<+\infty$ we say that $E$ is a \emph{set of finite perimeter}.
\end{defi}

\begin{teor}[Decomposition of $\bv$ functions]
Let $u\in\bv(\R^n)$. Then we have
\[
dDu=\nabla u\,d\Ln+\abs{\overline{u}-\underline{u}}\nu_u\,d\Hn\lfloor_{{\Huge J_u}}+ dD^c u,
\]
where $\nabla u$ is the density of $Du$ with respect to the Lebesgue measure, $\nu_u$ is the normal to the jump set $J_u$ and $D^c u$ is the \emph{Cantor part} of the measure $Du$. The measure $D^c u$ is singular with respect to the Lebesgue measure and concentrated out of $J_u$.
\end{teor}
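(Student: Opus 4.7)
The plan is to peel off contributions to $Du$ one at a time, according to their singularity with respect to $\Ln$ and, within the singular part, their concentration on $J_u$.

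First I would apply the Radon--Nikodym--Lebesgue decomposition of the vector measure $Du$ with respect to $\Ln$, writing $Du = Du^{a} + Du^{s}$ with $Du^{a} \ll \Ln$ and $Du^{s}\perp \Ln$. This produces a density $\nabla u \in L^{1}(\R^n;\R^n)$ such that $Du^{a} = \nabla u\, d\Ln$; a standard Lebesgue differentiation argument then shows that, at $\Ln$-a.e.\ point $x$, the density $\nabla u(x)$ coincides with the approximate gradient of $u$. This handles the first summand.

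Next I would split the singular part across the jump set, $Du^{s} = Du^{s}\lfloor J_u + Du^{s}\lfloor (\R^n\setminus J_u)$, and \emph{define} the Cantor part by $D^{c}u := Du^{s}\lfloor (\R^n\setminus J_u)$; by construction it is singular with respect to $\Ln$ and concentrated outside $J_u$, so the only nontrivial content of the theorem is to identify $Du^{s}\lfloor J_u$ with $\abs{\overline u - \underline u}\,\nu_u\,d\Hn\lfloor J_u$. Since $u \in \bv(\R^n)$, the jump set $J_u$ is countably $(n-1)$-rectifiable and admits, at $\Hn$-a.e.\ $x\in J_u$, an approximate tangent hyperplane orthogonal to $\nu_u(x)$. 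At such an $x$ I would perform a blow-up: the rescalings $u_{r}(y) := u(x+ry)$ converge in $L^{1}_{\loc}(\R^n)$ to the pure-jump function taking value $\overline u(x)$ on $H^{+}_{\nu_u(x)}$ and $\underline u(x)$ on $H^{-}_{\nu_u(x)}$. Passing to the limit in the weak$^{\ast}$ convergence of $Du_{r}$ and combining with a Besicovitch-type differentiation of $Du$ against $\Hn\lfloor J_u$ then identifies the density at $x$ as $\abs{\overline u(x) - \underline u(x)}\,\nu_u(x)$.

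The hard part will be the blow-up analysis on $J_u$: one must establish that $\Hn$-almost every point of $J_u$ is a jump point in the strong sense above, and in particular the asymptotic mass estimate $\abs{Du}(B_r(x)) \sim \omega_{n-1}\abs{\overline u(x) - \underline u(x)}\,r^{n-1}$ as $r\to 0^{+}$. This is where the deep fine properties of $\bv$ functions enter, namely Federer's structure theorem for countably $(n-1)$-rectifiable sets, the existence of one-sided traces on rectifiable hypersurfaces, and the absolute continuity of $\abs{Du}$ with respect to $\Hn$ on Borel subsets of $J_u$. Once these facts are available, the three pieces sum to $Du$ tautologically and the stated decomposition follows.
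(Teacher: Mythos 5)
The paper does not prove this theorem---it is stated as a background tool and delegated to the reference \cite{bv}---and your outline is exactly the canonical argument given there: Lebesgue--Radon--Nikodym decomposition of $Du$ against $\Ln$, definition of $D^c u$ as the restriction of the singular part to the complement of $J_u$ (which makes the last two assertions true by construction), and identification of the jump part $Du\lfloor J_u$ with $\abs{\overline{u}-\underline{u}}\,\nu_u\,\Hn\lfloor J_u$ via blow-up at $\Hn$-a.e.\ point of the rectifiable set $J_u$ combined with Besicovitch differentiation of $Du$ with respect to $\Hn\lfloor J_u$. You have correctly isolated the genuinely deep inputs---approximate differentiability $\Ln$-a.e., countable $(n-1)$-rectifiability of $J_u$, the Federer--Vol'pert fact that the approximate discontinuity set differs from $J_u$ by an $\Hn$-negligible set on which $\abs{Du}$ vanishes, and the vanishing of $\abs{Du}$ on $\Hn$-null sets---though, like the paper, you defer their proofs to the standard literature rather than carrying them out.
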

\begin{defi}
Let $v\in\bv(\R^n)$, let $\Gamma\subseteq\R^n$ be a $\Hn$-rectifiable set, and let $\nu(x)$ be the generalized normal to $\Gamma$ defined for $\Hn$-a.e. $x\in\Gamma$. For $\Hn$-a.e. $x\in \Gamma$ we define the traces $\gamma_\Gamma^{\pm}(v)(x)$ of $v$ on $\Gamma$ by the following Lebesgue-type limit quotient relation
\[
\lim_{r\to 0}\frac{1}{r^n}\int_{B_r^{\pm}(x)}\abs{v(y)-\gamma_\Gamma^{\pm}(v)(x)}\,d\Ln(y)=0,
\]
where
\[
B_{r}^{+}(x)=\set{y\in B_r(x) | \nu(x)\cdot(y-x)>0},
\]
\[
B_{r}^{-}(x)=\set{y\in B_r(x) | \nu(x)\cdot(y-x)<0}.
\]\end{defi}
\begin{oss}
Notice that, by~\cite[Remark 3.79]{bv}, %
for $\Hn$-a.e. $x\in\Gamma$, $(\gamma_\Gamma^{+}(v)(x),\gamma_\Gamma^-(v)(x))$ coincides with either  $(\overline{v}(x),\underline{v}(x))$ or $(\underline{v}(x),\overline{v}(x))$, while, for $\Hn$-a.e. $x\in \Gamma\setminus J_v$, we have that $\gamma_\Gamma^+(v)(x)=\gamma_\Gamma^-(v)(x)$ and they coincide with the approximate limit of $v$ in $x$. In particular, if $\Gamma=J_v$, we have
\[
\gamma_{J_v}^+(v)(x)=\overline{v}(x) \qquad \gamma_{J_v}^-(v)(x)=\underline{v}(x)
\]
for $\Hn$-a.e. $x\in J_v$. 

\end{oss}
We now focus our attention on the $\bv$ functions whose Cantor parts vanish.
\begin{defi}[$\sbv$]
Let $u\in\bv(\R^n)$. We say that $u$ is a \emph{special function of bounded variation} and we write $u\in\sbv(\R^n)$ if $D^c u=0$. %
\end{defi}
For $\sbv$ functions we have the following.
\begin{teor}[Chain rule]\label{teor: chain}
Let $g\colon\R\to\R$ be a differentiable function. Then if $u\in\sbv(\R^n)$, we have
\[\nabla g(u)=g'(u)\nabla u.\]
Furthermore, if $g$ is increasing,
\[\overline{g(u)}=g(\overline{u}),\quad \underline{g(u)}=g(\underline{u})\]
 while, if $g$ is decreasing,
\[\overline{g(u)}=g(\underline{u}),\quad \underline{g(u)}=g(\overline{u}).\]
\end{teor}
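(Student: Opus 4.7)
The plan is to reduce the statement to the classical chain rule for $\bv$ functions (Vol'pert--Ambrosio--Dal Maso) and specialize it to the $\sbv$ setting via the hypothesis $D^c u=0$. Concretely, if $u\in\bv(\R^n)$ and $g\colon\R\to\R$ is Lipschitz, the $\bv$ chain rule yields $g(u)\in\bv(\R^n)$ with decomposition
\[
Dg(u) = g'(u)\,\nabla u\, d\Ln + [g(\overline{u}) - g(\underline{u})]\,\nu_u\, d\Hn\lfloor_{J_u} + g'(\tilde u)\, dD^c u,
\]
where $\tilde u$ denotes the precise representative. Since $u\in\sbv(\R^n)$ forces $D^c u=0$, the last term drops out; hence $g(u)\in\sbv(\R^n)$ and the density of $Dg(u)$ with respect to $\Ln$ is precisely $g'(u)\,\nabla u$, which gives the first identity.

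For the trace identities I would argue directly from the definitions of $\overline{u}$ and $\underline{u}$. If $g$ is continuous and increasing, the level-set identity $\{g(u)>g(t)\}=\{u>t\}$ holds up to a set where $g$ is locally constant, which cannot affect the Lebesgue density quotient at almost every point. One can therefore rewrite
\[
\overline{g(u)}(x) = \inf\Set{g(t) | \limsup_{r\to 0^+}\frac{\Ln(B_r(x)\cap\{u>t\})}{\Ln(B_r(x))}=0} = g(\overline{u}(x)),
\]
where the last equality uses continuity and monotonicity of $g$ to pass $g$ through the infimum. The same argument with $<$ in place of $>$ yields $\underline{g(u)}=g(\underline{u})$. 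If instead $g$ is decreasing, then $\{g(u)>g(t)\}=\{u<t\}$, so the roles of $\overline{u}$ and $\underline{u}$ are interchanged and the remaining two identities follow.

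The main obstacle is purely technical: the $\bv$ chain rule is usually stated under a Lipschitz assumption on $g$, whereas here $g$ is only assumed differentiable. This is harmless in the setting of the paper, since $u\in[0,1]$ $\Ln$-a.e. by the formulation of~\eqref{problema}, so one may replace $g$ by a Lipschitz extension of $g|_{[0,1]}$ without altering any quantity in the statement. A secondary care point is the treatment of intervals on which $g$ is only non-strictly monotone; monotonicity ensures that the corresponding exceptional level sets are irrelevant for the density-based definitions of $\overline{u}$ and $\underline{u}$, and for the jump-set traces via the \emph{Remark} after the definition of $\gamma_\Gamma^\pm$.
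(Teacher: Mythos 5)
The paper never proves this statement: it appears in the ``Notation and Tools'' section and is implicitly deferred to the references listed there (in particular \cite{bv}), so there is no in-paper argument to compare yours against. Your proposal is the standard proof and is essentially correct: Vol'pert's chain rule gives the decomposition of $Dg(u)$, the hypothesis $D^cu=0$ kills the Cantor term, and the absolutely continuous density is $g'(u)\nabla u$; the trace identities follow from the density definitions of $\overline{u}$ and $\underline{u}$. Two caveats are worth recording. First, your reduction to the Lipschitz case via ``a Lipschitz extension of $g|_{[0,1]}$'' is not watertight for a merely differentiable $g$: a function differentiable on $[0,1]$ can have unbounded derivative there (e.g.\ $t^2\sin(t^{-2})$ near $0$), so $g|_{[0,1]}$ need not be Lipschitz and need not admit a Lipschitz extension. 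This is really an imprecision of the theorem as stated rather than of your argument; in every application in the paper $g$ is $C^1$ with bounded derivative on the compact range of $u$, so the extension device works. Second, for the trace identities the level-set identity $\set{g(u)>g(t)}=\set{u>t}$ can fail on a non-negligible set when $g$ has flat parts, and your claim that this ``cannot affect the density quotient'' is not justified as stated. The clean route is: for continuous non-decreasing $g$ and any $s>g(\overline{u}(x))$ one has $\set{g(u)>s}=\set{u>t_0}$ with $\set{r:g(r)>s}=(t_0,+\infty)$, and continuity of $g$ at $t_0$ forces $t_0>\overline{u}(x)$ strictly, giving density zero; the symmetric estimate for $s<g(\overline{u}(x))$ gives positive upper density, whence $\overline{g(u)}=g(\overline{u})$ with no exceptional set. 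For strictly monotone $g$, which is all the paper uses, your argument is fine as written.
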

We now state a compactness theorem in $\sbv$ that will be useful in the following. 
\begin{teor}\label{teor:compact}[Compactness in $\sbv$]
Let $u_k$ be a sequence in $\sbv(\R^n)$. Let $p,q>1$, and let $C>0$ such that for every $k\in\N$ 
\[
\int_{\R^n}\abs{\nabla u_k}^p\,d\Ln+\norma{u_k}_{\infty}+\Hn(J_{u_k})<C.
\]
Then there exists $u\in\sbv(\R^n)$ and a subsequence $u_{k_j}$ such that
\begin{itemize}
    \item \emph{Compactness:}
    \[
    u_{k_j}\xrightarrow{L^1_{\loc}(\R^n)} u
    \]
    \item \emph{Lower semicontinuity:} for every open set $A$ we have
    \[
    \int_A \abs{\nabla u}^p\, d\Ln  \le \liminf_{j\to+\infty}\int_A \abs{\nabla u_{k_j}}^p\,d\Ln
    \]
    and
    \[
    \int_{J_u\cap A}\left( \overline u^q+\underline u^q\right)\, d\Hn  \le \liminf_{j\to+\infty}\int_{J_{u_{k_j}}\cap A} \left(\overline u_{k_j}^q+\underline u_{k_j}^q\right)\, d\Hn
    \]
\end{itemize}
\end{teor}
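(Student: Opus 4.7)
The plan is to reduce the statement to the standard $\sbv$ compactness and lower semicontinuity machinery developed by Ambrosio (cf.~\cite{bv}). The uniform bound on $\norma{u_k}_\infty$ provides local $L^1$ control, while the $L^p$ bound on $\nabla u_k$ (with $p>1$) together with the $\Hn$ bound on $J_{u_k}$ are exactly the ingredients of Ambrosio's $\sbv$ closure theorem.

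First I would check that $(u_k)$ is bounded in $\bv_{\loc}(\R^n)$. On any bounded open set $\Omega$, Hölder's inequality gives
\[
\int_\Omega\abs{\nabla u_k}\,d\Ln \le \Ln(\Omega)^{1-\frac{1}{p}}\,C^{\frac{1}{p}},
\]
the jump part of $\abs{Du_k}(\Omega)$ is controlled by $2\norma{u_k}_\infty\Hn(J_{u_k})\le 2C^2$, and $\norma{u_k}_{L^1(\Omega)}\le C\Ln(\Omega)$. Classical $\bv$-compactness, combined with a diagonal extraction over an exhausting family of balls, then yields a subsequence $u_{k_j}\to u$ in $L^1_{\loc}(\R^n)$ with $u\in\bv_{\loc}(\R^n)$ and $\norma{u}_\infty\le C$. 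Ambrosio's $\sbv$ closure theorem then promotes this to $u\in\sbv(\R^n)$ and gives weak convergence $\nabla u_{k_j}\rightharpoonup\nabla u$ in $L^p_{\loc}(\R^n,\R^n)$; lower semicontinuity of the bulk term on any open $A$ follows at once from convexity of $t\mapsto\abs{t}^p$.

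The main obstacle is the lower semicontinuity of the surface term $\int_{J_u\cap A}(\overline u^q+\underline u^q)\,d\Hn$, which I would obtain by invoking Ambrosio's lower semicontinuity theorem for surface energies in $\sbv$, applied to the integrand $\varphi(a,b,\nu)=a^q+b^q$. Since $\norma{u_{k_j}}_\infty\le C$, the traces satisfy $\overline{u_{k_j}},\underline{u_{k_j}}\in[0,C]$, so $\varphi$ is bounded, continuous and symmetric in $(a,b)$. Moreover it satisfies the elementary subadditivity $\varphi(a,c)\le\varphi(a,b)+\varphi(b,c)$ whenever $0\le a\le b\le c\le C$ (this reduces to the trivial bound $0\le 2b^q$), which is the $\bv$-elliptic hypothesis needed to upgrade $L^1_{\loc}$ convergence to lower semicontinuity of the surface functional. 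Combining the two semicontinuity statements yields the claim.
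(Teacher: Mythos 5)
The paper gives no proof of this theorem, referring instead to \cite[Theorems 4.7, 4.8 and 5.22]{bv}, which is precisely the route you take: your derivation of the local $\bv$ bounds, the diagonal extraction, and the verification of the symmetry/subadditivity hypotheses for the surface integrand $\varphi(a,b,\nu)=a^q+b^q$ (with traces confined to $[0,C]$ by the uniform $L^\infty$ bound) is exactly what is needed to invoke those theorems. Your proposal is correct and takes essentially the same approach as the paper.
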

We refer to \cite[Theorem 4.7, Theorem 4.8, Theorem 5.22]{bv} for the proof of this theorem. We now conclude this section with the following proposition whose proof can be found in \cite[Lemma 3.1]{CK}.
\begin{prop}
Let $u\in\bv(\R^n)\cap L^\infty(\R^n)$. Then
\[
\int_0^1 P(\set{u>s};\R^n\setminus J_u)\,ds=\abs{Du}(\R^n\setminus J_u).
\]
\end{prop}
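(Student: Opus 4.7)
The plan is to obtain the identity as a direct application of the $\bv$ coarea formula, localized to the Borel set $\R^n\setminus J_u$. The only substantive observation needed in addition is that under the standing normalization $0\le u\le 1$ of the paper (cf.\ the reduction preceding problem \eqref{problema}), the perimeters of the superlevel sets vanish outside $[0,1]$, so the full-line coarea integral collapses to $\int_0^1$.

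First I would recall the Borel form of the coarea formula for $\bv$ functions (see e.g.\ \cite[Theorem 3.40]{bv}): for every Borel set $B\subseteq\R^n$ and every $u\in\bv(\R^n)$,
\[
\abs{Du}(B)=\int_{-\infty}^{+\infty}P(\set{u>s};B)\,ds.
\]
Since $J_u$ is countably $\Hn$-rectifiable for every $u\in\bv(\R^n)$, it is in particular Borel, hence so is $B=\R^n\setminus J_u$. Substituting this choice yields
\[
\abs{Du}(\R^n\setminus J_u)=\int_{-\infty}^{+\infty}P(\set{u>s};\R^n\setminus J_u)\,ds.
\]
Next I would truncate the range of integration using $0\le u\le 1$: for $s\ge 1$ the set $\set{u>s}$ is $\Ln$-negligible, while for $s<0$ its complement is, so in either case the indicator of $\set{u>s}$ agrees $\Ln$-a.e.\ with a constant and $P(\set{u>s};\R^n)=0$, a fortiori $P(\set{u>s};\R^n\setminus J_u)=0$. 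This reduces the integration to $[0,1]$ and gives the claim.

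There is no real obstacle. Conceptually, the proposition simply records that the jump part of $Du$ is concentrated on $J_u$ and therefore contributes nothing to $\abs{Du}(\R^n\setminus J_u)$, while the coarea formula packages the absolutely continuous and Cantor parts of $Du$ as an integral of perimeters of superlevel sets. The only bookkeeping is to verify that the coarea formula localizes correctly to Borel subsets and that the integration range can be cut down to $[0,1]$, both of which are straightforward.
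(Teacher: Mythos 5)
Your proof is correct and is essentially the argument behind the paper's cited source \cite[Lemma 3.1]{CK}: the Borel-localized coarea formula applied to $B=\R^n\setminus J_u$, combined with the observation that the normalization $0\le u\le1$ (implicit in the statement and standing throughout the paper, as you rightly flag) makes $P(\set{u>s};\R^n)$ vanish for $s\notin[0,1]$, collapsing the integral to $[0,1]$. Nothing further is needed.
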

\section{Lower Bound}
\label{section: lower}
In the following, we assume that $\Omega\subset\R^n$ is a bounded open set and that $p$ and $q$ are two positive real numbers such that
\begin{equation}\label{eq:p/q}\dfrac{q'}{p'}>1-\dfrac{1}{n}\end{equation}
where $p'$ and $q'$ are the Hölder conjugates of $p$ and $q$ respectively.

\begin{defi}
\label{defi: inward}
Let $v\in\sbv(\R^n)$ be a function such that $v=1$ a.e. in $\Omega$. We say that $v$ is an \emph{inward minimizer} if
\[\mathcal{F}(v)\leq \mathcal{F}(v\chi_A),\]
for every set of finite perimeter $A$ containing $\Omega$, where $\chi_A$ is the characteristic function of set $A$.
\end{defi}

Let $A\subset \R^n$ be a set of finite perimeter such that $\Omega\subset A$, and let $v\in\sbv(\R^n)$. We will make use of the following expression
\begin{equation}
\label{eq: truncated}
\begin{split}
\mathcal{F}(v\chi_A)=&\int_{A}\abs{\nabla v}^p\, d\Ln + \beta\int_{J_v\cap A^{(1)}}\left(\underline{v}^q+\overline{v}^{\,q}\right) \,d\Hn +\beta\int_{\partial^* A\setminus J_v}v^q\,d\Hn\\[5 pt] &+\beta\int_{J_v\cap\partial^* A}\gamma_{\partial A}^-(v)^q\, d\Hn +\Ln\left((\set{v>0}\cap A)\setminus\Omega\right),
\end{split}
\end{equation}
Let $B$ be a ball containing $\Omega$, then $\chi_B\in\sbv(\R^n)$ and $\chi_B=1$ in $\Omega$, we will denote $\mathcal{F}(\chi_B)$ by $\tilde{\mathcal{F}}$.

\begin{teor}\label{teor: lower-bound}
There exists a positive constant $\delta=\delta(\Omega,\beta,p,q)$ such that if $u$ is an inward minimizer with $\mathcal{F}(u)\leq2\tilde{\mathcal{F}}$, then \[u>\delta\] 
$\Ln$-almost everywhere in $\Set{u>0}$.
\end{teor}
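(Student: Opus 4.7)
The plan is to use the truncation $v:=u\chi_{\{u>\delta\}}$ as a competitor: since $u\equiv 1$ on $\Omega$, the set $A=\{u>\delta\}$ contains $\Omega$ for every $\delta\in(0,1)$, so $v$ is admissible in the inward minimality of $u$. Expanding $\mathcal{F}(v)$ via \eqref{eq: truncated} with $A=\{u>\delta\}$, the ``new'' reduced boundary $\partial^*A$ splits into the smooth level part $\partial^*A\setminus J_u$, on which the approximate value of $u$ equals $\delta$ and which therefore contributes $\beta\delta^q\,\Hn(\partial^*A\setminus J_u)$, and the sided portion $\partial^*A\cap J_u$, on which the inner trace $\gamma^-(u)$ coincides with $\overline u\ge\delta$, so that the added term $\gamma^-(u)^q=\overline u^q$ exactly absorbs the upper-jump contribution already present in $\mathcal{F}(u)$. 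The remaining leftover terms (the $\underline u^q$ integrals on $J_u\cap\partial^*A$ and the full jump integrand on $J_u\cap A^{(0)}$, where both approximate values are $\le\delta$) are nonnegative; dropping them from $\mathcal{F}(u)\le\mathcal{F}(v)$ yields the core estimate
\[
\Ln(\{0<u\le\delta\})+\int_{\{u\le\delta\}}\abs{\nabla u}^p\,d\Ln \;\le\;\beta\delta^q\,\Hn(\partial^*\{u>\delta\}\setminus J_u).
\]

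To exploit this I would combine it with the co-area identity of the last Proposition of \autoref{notation},
\[
\int_0^1 \Hn(\partial^*\{u>s\}\setminus J_u)\,ds=\int_{\R^n}\abs{\nabla u}\,d\Ln,
\]
whose right-hand side is bounded by a constant $C(\Omega,\beta,p,q)$ by Hölder's inequality applied to $\int\abs{\nabla u}^p\le\mathcal{F}(u)\le 2\tilde{\mathcal{F}}$ together with the bound on $\Ln(\{u>0\})$ that also follows from $\mathcal{F}(u)\le 2\tilde{\mathcal{F}}$. The monotonicity of $\delta\mapsto m(\delta):=\Ln(\{0<u\le\delta\})$, together with integrability of $\Hn(\partial^*\{u>s\}\setminus J_u)$ in $s$, forces $m(\delta)\to 0$ as $\delta\to 0^+$ and yields quantitative polynomial decay estimates on $m$ and on $\int_{\{u\le\delta\}}\abs{\nabla u}^p$.

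The concluding step is to upgrade this decay to the actual vanishing $m(\delta)=0$ for every $\delta\le\delta(\Omega,\beta,p,q)$. This is where the standing hypothesis~\eqref{eq:p/q} enters: being equivalent to the subcriticality of $q$ for the Sobolev-trace embedding, it yields, via a Sobolev/relative isoperimetric inequality applied to a truncation/rescaling of $u$, an additional lower bound on $\Hn(\partial^*\{u>\delta\}\setminus J_u)$ of the form $\gtrsim m(\delta)^{(n-1)/n}-C$. Inserting this back into the core estimate gives a strict self-improvement that, iterated on dyadic levels $\delta_k\searrow 0$, forces $m(\delta)=0$ at a positive threshold. The principal difficulty is precisely this final step: identifying the correct form of the Sobolev/isoperimetric inequality in the $\sbv$ framework with the free-boundary geometry of $\{u>\delta\}$, and running the iteration so that the exponent gap provided by~\eqref{eq:p/q} is genuinely used; without that gap only $m(\delta)\to 0$ can be concluded.
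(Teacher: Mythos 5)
Your first step is exactly the paper's: the competitor $u\chi_{\set{u>t}}$ together with \eqref{eq: truncated} yields the paper's inequality \eqref{eq: derestimates}, and your accounting of which jump terms cancel, which survive with the trace $\overline{u}^q$, and which can be dropped is correct. The problem is the concluding step, which you yourself flag as the principal difficulty: as sketched it cannot work, because the inequality you propose to feed back points the wrong way. The core estimate bounds $\Ln(\set{0<u\le\delta})$ and the Dirichlet energy \emph{from above} by $\beta\delta^q\,\Hn(\partial^*\set{u>\delta}\setminus J_u)$; an isoperimetric \emph{lower} bound on that same perimeter in terms of $m(\delta)^{(n-1)/n}$ only enlarges the right-hand side further, so combining the two produces no contradiction and no self-improvement. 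To close the loop one needs an \emph{upper} bound on the perimeter term by quantities that the core estimate itself controls, and for a single fixed level $\delta$ no such bound exists (the perimeter of one level set is not controlled by the measure of the sublevel set). This forces an integration over levels, which your outline uses only to get an averaged bound via the coarea identity, not to set up the self-improving mechanism.

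What the paper does instead is introduce $f(t)=\int_0^t s^{q-1}P(\set{u>s};\R^n\setminus J_u)\,ds=\int_{\set{u\le t}\setminus J_u}u^{q-1}\abs{\nabla u}\,d\Ln$, so that the right-hand side of \eqref{eq: derestimates} is exactly $\beta t f'(t)$. Hölder's inequality (with the auxiliary exponent $\gamma=\frac{q1^*}{(q-1)p'}$, which is $>1$ precisely by \eqref{eq:p/q}) together with the $\bv\hookrightarrow L^{1^*}$ embedding applied to $(u\chi_{\set{u\le t}})^q$ bounds $f(t)$ by a product of $\Ln(\set{0<u\le t})$, the Dirichlet energy on $\set{u\le t}$, and the total variation of $(u\chi_{\set{u\le t}})^q$ — and \emph{each} of these factors is in turn bounded by $tf'(t)$ through \eqref{eq: derestimates}. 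This yields the differential inequality $f(t)\le C\left(tf'(t)\right)^{1+\frac{1}{nq'}}$, whose exponent strictly exceeds $1$ thanks to the gap in \eqref{eq:p/q}; integrating $f'/f^{\frac{nq}{q(n+1)-1}}\ge C/t$ from $t_0$ to $1$ shows $f(t_0)$ cannot be positive once $t_0$ is below an explicit threshold $\delta$, whence $u>\delta$ a.e.\ on $\set{u>0}$. So the missing idea is not a trace-Sobolev or relative isoperimetric inequality for the level sets, but the ODE structure in the level parameter obtained by reusing \eqref{eq: derestimates} to absorb every factor produced by Hölder and the $\bv$ embedding.
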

\begin{proof}
Let $0<t<1$ and
\[f(t)=\int_{\set{u\leq t}\setminus J_u} u^{q-1}\abs{\nabla u}\,d\Ln=\int_0^t s^{q-1}P(\Set{u> s};\R^n\setminus J_u)\,ds.\]
For every such $t$, we have
\begin{equation}
\label{eq: fbound}
    f(t)\leq\left(\int_{\set{u\leq t}} u^{(q-1)p'}\,d\Ln\right)^{\frac{1}{p'}}\left(\int_{\set{u\leq t}\setminus J_u}\abs{\nabla u}^p\,d\Ln\right)^{\frac{1}{p}}\leq \mathcal{F}(u)\leq 2\tilde{\mathcal{F}}.
\end{equation}
Let $u_t=u\chi_{\set{u>t}}$. Using \eqref{eq: truncated} we have
\[\begin{split}0\leq& \,\mathcal{F}(u_t)-\mathcal{F}(u)\\[5 pt]
=&\beta\int_{\partial^*\set{u>t}\setminus J_u}\overline{u}^q\,d\Hn-\int_{\set{u\leq t}\setminus J_u}\abs{\nabla u}^p\,d\Ln-\beta\int_{J_u\cap\partial^*\set{u>t}}\underline{u}^q\,d\Hn+\\[5 pt]&-\beta\int_{J_u\cap\set{u>t}^{(0)}}\left(\overline{u}^q+\underline{u}^q\right)\,d\Hn-\Ln(\set{0<u\leq t}),
\end{split}\]
and rearranging the terms,
\begin{equation}
\label{eq: derestimates}
\begin{split}\int_{\set{u\leq t}\setminus J_u}\abs{\nabla u}^p\,d\Ln&+\!\beta\int_{J_u\cap\partial^*\set{u>t}}\underline{u}^q\,d\Hn+\!\beta\int_{J_u\cap\set{u>t}^{(0)}}\left(\overline{u}^q+\underline{u}^q\right)\,d\Hn+\\[7 pt]&+\Ln(\set{0<u\leq t})\leq \beta t^q P(\set{u>t};\R^n\setminus J_u)=\beta t f'(t). \end{split}
\end{equation}
On the other hand,
\[\begin{split}f(t)&=\int_{\set{u\leq t}\setminus J_u} u^{q-1}\abs{\nabla u}\,d\Ln\\[5 pt] &\leq \left(\int_{\set{u\leq t}} u^{(q-1)p'}\,d\Ln\right)^{\frac{1}{p'}}\!\!\left(\int_{\set{u\leq t}\setminus J_u}\abs{\nabla u}^p\,d\Ln\right)^{\frac{1}{p}}\\[7 pt]
&\leq\! \Biggl(\Ln(\set{0<u\leq t})\Biggr)^{\!\!\frac{1}{p'\gamma'}}\!\!\left(\int_{\set{u\leq t}} u^{q 1^*}\,d\Ln\right)^{\!\!\frac{1}{q'1^*}}\!\!\left(\int_{\set{u\leq t}\setminus J_u}\abs{\nabla u}^p\,d\Ln\right)^{\frac{1}{p}},
\end{split}\]
where we used \[1^*=\dfrac{n}{n-1},
\qquad \text{and}\qquad 
\gamma=\dfrac{q1^*}{(q-1)p'},  \]
and $\gamma>1$ by \eqref{eq:p/q}. By classical BV embedding in $L^{1^*}$ applied to the function $(u\chi_{\set{u\leq t}})^q$ and the estimate \eqref{eq: derestimates}, we have
 \[f(t)\leq C(n,\beta) \biggl(t f'(t)\biggr)^{1-\frac{n-1}{q'n}}\left(\int_{\R^n}\,d\abs*{D (u\chi_{\set{u\leq t}})^q}\right)^{\frac{1}{q'}}.\]
We can compute
\[\begin{split}\int_{\R^n}\,d\abs*{D(u\chi_{\set{u\leq t}})^q}&\leq q\Biggl(\Ln(\set{0<u\leq t})\Biggr)^{\frac{1}{p'}}\left(\int_{\set{u\leq t}\setminus J_u}\abs{\nabla u}^p\,d\Ln\right)^{\frac{1}{p}}+\\[5 pt]&+\! \int_{J_u\cap\set{u>t}^{(0)}}\left(\overline{u}^q+\underline{u}^q\right)\,d\Hn+\!\int_{J_u\cap \partial^*\set{u>t}}\underline{u}^q\,d\Hn+\\[5 pt]&+t^qP(\set{u>t};\R^n\setminus J_u)\leq (2+q\beta)tf'(t).
\end{split}\]
We therefore get
\[f(t)\leq C(n,\beta,q) \left(tf'(t)\right)^{1+\frac{1}{nq'}}.\]
Let $0<t_0<1$ such that $f(t_0)>0$, then for every $t_0<t<1$, we have $f(t)>0$ and
\[\dfrac{f'(t)}{f(t)^{\frac{nq}{q(n+1)-1}}}\geq \dfrac{C(n,\beta,q)}{t},\]
integrating from $t_0$ to $1$, we have
\[f(1)^{\frac{q-1}{q(n+1)-1}}-f(t_0)^{\frac{q-1}{q(n+1)-1}}\geq C(n,\beta,q) \log\dfrac{1}{t_0},\]
so that, using \eqref{eq: fbound},
\[f(t_0)^{\frac{q-1}{q(n+1)-1}}\leq (2\tilde{\mathcal{F}})^{\frac{q-1}{q(n+1)-1}} + C(n,\beta,q)\log t_0.\]
Let \[\delta=\exp\left(-\dfrac{(2\tilde{\mathcal{F}})^{\frac{q-1}{q(n+1)-1}}}{C(n,\beta,q)}\right),\]
for every $t_0<\delta$ we would have $f(t_0)<0$, which is a contradiction. Therefore $f(t)=0$ for every $t<\delta$, from which $u>\delta$ $\Ln$-almost everywhere on $\set{u>0}$.
\end{proof}
\begin{oss}
\label{oss: incljump}
From \autoref{teor: lower-bound}, if $u$ is an inward minimizer with $\mathcal{F}(u)\le2\tilde{\mathcal{F}}$,  we have that 
\begin{equation*}
\partial^*\set{u>0}\subseteq J_u\subseteq K_u.
\end{equation*} Indeed, on $\partial^*\set{u>0}$ we have that, by definition,  $\underline{u}=0$ and that, since $u\ge\delta$ $\Ln$-almost everywhere in $\set{u>0}$, $\overline{u}\ge\delta$.
\end{oss}
\begin{prop}\label{cor: density0}
There exists a positive constant $\delta_0=\delta_0(\Omega,\beta,p,q)<\delta$ such that if $u$ is an inward minimizer with $\mathcal{F}(u)\leq2\tilde{\mathcal{F}}$, then $u$ is supported on $B_{\rho(\delta_0)}$, where $\rho(\delta_0)=\delta_0^{1-q}$ and $B_{\rho(\delta_0)}$ is the ball centered at the origin with radius $\rho(\delta_0)$. Moreover there exist positive constants $C(\Omega,\beta,p,q),C_1(\Omega,\beta,p,q)$ such that, for any $B_r(x)\subseteq \R^n\setminus\Omega$ we have
\begin{equation}\label{eq: density up}\Hn(J_u\cap B_r(x))\leq C(\Omega,p,q)r^{n-1},\end{equation}
and if $x\in K_u$, then
\begin{equation}\label{eq: densità supp}\Ln(B_r(x)\cap\set{u>0})\geq C_1(\Omega,p,q)r^n.\end{equation}
\end{prop}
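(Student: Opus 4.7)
The plan is to apply the inward minimality $\mathcal{F}(u)\leq\mathcal{F}(u\chi_A)$ from \autoref{defi: inward} against two families of comparison sets $A\supseteq\Omega$ of finite perimeter. In every case the crucial observation is that, by \autoref{oss: incljump}, $\overline u\geq\delta$ on $J_u$, so $\overline u^q+\underline u^q\geq\delta^q$ throughout $J_u$; combining this with $u\leq1$ and the non-negativity of $\overline u^q+\underline u^q-\gamma_{\partial A}^{-}(u)^q$ on $J_u\cap\partial^*A$, the expansion~\eqref{eq: truncated} collapses to the master inequality
\[
\Ln(\{u>0\}\cap A^c)+\beta\delta^q\Hn(J_u\cap A^{(0)})\leq\beta\Hn(\partial^*A\cap\{u>0\}).
\]

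For the support bound I would take $A=B_R$ with $\Omega\subset B_R$ and set $\phi(R):=\Ln(\{u>0\}\setminus B_R)$, so that $|\phi'(R)|=\Hn(\partial B_R\cap\{u>0\})$ by coarea. Coupling the master inequality with the isoperimetric inequality applied to $\{u>0\}\setminus B_R$, whose essential boundary lies in $J_u\cup\partial B_R$ since $\partial^*\{u>0\}\subseteq J_u$ by \autoref{oss: incljump}, produces
\[
\phi(R)^{\frac{n-1}{n}}\leq C_n(1+\delta^{-q})\,|\phi'(R)|.
\]
Integrating $(\phi^{1/n})'(R)\leq -c$ gives $\phi(R)=0$ for every $R\geq R^*$, with $R^*$ depending only on $\Omega,\beta,p,q$. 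Choosing $\delta_0<\delta$ small enough that $\delta_0^{1-q}\geq R^*$, which is possible because $q>1$, yields $\supp u\subseteq B_{\rho(\delta_0)}$ with $\rho(\delta_0)=\delta_0^{1-q}$.

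With the support contained in $B_{\rho(\delta_0)}$, the upper estimate~\eqref{eq: density up} follows immediately by applying the master inequality to $A=B_R\setminus B_r(x)$ for any $R>\rho(\delta_0)$: since the trace of $u$ vanishes on $\partial B_R$, the right-hand side reduces to $\beta\int_{\partial B_r(x)\setminus J_u}u^q\leq\beta n\omega_n r^{n-1}$, whence $\Hn(J_u\cap B_r(x))\leq(n\omega_n/\delta^q)\,r^{n-1}$.

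For the lower estimate~\eqref{eq: densità supp} at $x\in K_u$, set $g(r):=\Ln(B_r(x)\cap\{u>0\})$. Since $\overline u\geq\delta$ at every point of $J_u$, the set $\{u>0\}$ has positive one-sided density at each such point, so $x\in\overline{J_u}$ forces $g(r)>0$ for every $r>0$. The master inequality also gives $\Hn(J_u\cap B_{r'}(x))\leq g'(r')/\delta^q$ for a.e.\ $r'\leq r$. I would then dichotomize: if there exists $r'\in[r/2,r]$ with $g(r')\geq\omega_n(r')^n/2$, then $g(r)\geq\omega_n r^n/2^{n+1}$ by monotonicity; otherwise $g(r')<\omega_n(r')^n/2$ throughout $[r/2,r]$, and the relative isoperimetric inequality gives $g(r')^{(n-1)/n}\leq C_n\Hn(J_u\cap B_{r'}(x))$, which combined with the master inequality yields $(g^{1/n})'(r')\geq c\delta^q/n$ a.e.\ on $[r/2,r]$. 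Integration from $r/2$ to $r$ then produces $g(r)\geq C_1 r^n$. The principal technical obstacle is the careful derivation of the master inequality from~\eqref{eq: truncated} with the correct signs on all boundary trace contributions; beyond that, the proof is a standard combination of inward-comparison bounds with isoperimetric estimates.
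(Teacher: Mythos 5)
Your proposal is correct and rests on the same two pillars as the paper's proof — the inward comparison $\mathcal F(u)\le\mathcal F(u\chi_A)$ via \eqref{eq: truncated} together with $\overline u\ge\delta$ on $J_u$ from \autoref{teor: lower-bound} — and your ``master inequality'' and the derivation of \eqref{eq: density up} match the paper's computation with $A=\R^n\setminus B_r(x)$. The genuine divergences are two. First, the paper obtains the support bound \emph{last}, as a corollary of \eqref{eq: densità supp}: a point of $K_u$ at distance greater than $\rho(\delta_0)$ from $\Omega$ would give $C_1\rho(\delta_0)^n\le\Ln(\set{u>0}\setminus\Omega)\le2\tilde{\mathcal F}$, impossible for small $\delta_0$. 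You instead prove it first and independently, by integrating $\phi^{(n-1)/n}\le C|\phi'|$ for $\phi(R)=\Ln(\set{u>0}\setminus B_R)$; this decouples the support statement from the density estimates at the cost of one extra coarea/isoperimetric computation (note the competitor $u\chi_{B_R}$ is admissible only for $B_R\supseteq\Omega$, so the ODE starts at some $R_0(\Omega)$). Second, for \eqref{eq: densità supp} the paper applies the global isoperimetric inequality to $B_r(x)\cap\set{u>0}^{(1)}$, whose perimeter splits into a $J_u$-part controlled by $\mu'(r)/\delta^q$ plus $\mu'(r)$, yielding $\mu\le K(\mu')^{n/(n-1)}$ with no case distinction; your relative-isoperimetric dichotomy is the classical De Giorgi variant and works equally well. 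Both routes are sound; only the measure-theoretic bookkeeping on $\partial^*A$ (traces versus $\set{u>0}^{(1)}$, valid for a.e.\ radius) needs to be made explicit.
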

\begin{proof}
By \autoref{teor: lower-bound}, if $u$ is an inward minimizer, we have
\[\int_{J_u\cap B_r(x)}\left(\overline{u}^q+\underline{u}^q\right)\,d\Hn\geq\delta^q\Hn(J_u\cap B_r(x)),\]
on the other hand, using $u\chi_{\R^n\setminus B_r(x)}$ as a competitor for $u$, we have 
\[\int_{J_u\cap B_r(x)}\left(\overline{u}^q+\underline{u}^q\right)\,d\Hn\leq \int_{\partial B_r(x)\cap\set{u>0}^{(1)}}\left(\overline{u}^q+\underline{u}^q\right)\,d\Hn\leq C(n)r^{n-1}.\]

Let now $x\in K_u$ and consider $\mu(r)=\Ln\left(B_r(x)\cap\set{u>0}^{(1)}\right)$. Using the isoperimetric inequality and inequality~\eqref{eq: density up}, we have that for almost every $r\in(0,d(x,\Omega))$
\[\begin{split}0<\mu(r)&\leq K(n)\,P\!\left(B_r(x)\cap\set{u>0}^{(1)}\right)^{\frac{n}{n-1}}\\&\leq K(\Omega,\beta,p,q)\,P\!\left(B_r(x);\set{u>0}^{(1)}\right)^{\frac{n}{n-1}}.\end{split}\]
Notice that we used \autoref{oss: incljump} in the last inequality. We have
\[\mu(r)\leq K \mu'(r)^{\frac{n}{n-1}}.\]
Integrating the differential inequality, we obtain
\[\Ln(B_r(x)\cap\set{u>0})\geq C_1(\Omega,\beta,p,q)r^n.\]
Finally, let $\delta_0>0$ and $x\in K_u$ such that $d(x,\Omega)>\rho(\delta_0)=\delta_0^{1-q}$. By~\eqref{eq: densità supp}
\[C_1(\Omega,\beta,p,q)\rho(\delta_0)^{n}\leq\Ln(\set{u>0}\cap\Omega)\leq2\tilde{\mathcal{F}},\]
which leads to a contradiction if $\delta_0$ is too small, hence there exists a positive value $\delta_0(\Omega,\beta,p,q)$ such that $\set{u>0}\subset B_{\rho(\delta_0)}$.
\end{proof}

\section{Existence}
\label{section: exist}
In this section, we are going to prove the existence of a solution $u$ to the problem~\eqref{problema}. Let us denote
\[
H_a=\Set{u\in\sbv(\R^n) | \begin{gathered}
u(x)=1 \text{ in }\Omega \\
u(x)\in\set{0}\cup[a,1] \text{ $\Ln$-a.e.} \\
\supp u \subseteq B_\frac{1}{a^{q-1}}
\end{gathered}}.
\]
We also denote by $H_0$ the set
\[
H_0=\Set{u\in\sbv(\R^n) | \begin{gathered}
u(x)=1 \text{ in }\Omega \\
u(x)\in[0,1] \text{ $\Ln$-a.e.}
\end{gathered}}.
\]
Notice that if $u\in H_0$ is an inward minimizer, by \autoref{teor: lower-bound} and \autoref{cor: density0}, then $u\in H_{\delta_0}$.
\begin{prop}\label{prop:H_0H_a}
Let $u\in H_0$. Then $u$ is a minimizer for the functional~\eqref{funzionale} on $H_0$ if and only if $u\in H_{\delta_0}$ and 
\[
\mathcal{F}(u)\le \mathcal{F}(v) \qquad \forall v\in H_{\delta_0}.
\]
\begin{proof}
As we observed before, if $u$ is a minimizer over $H_0$ then $u$ is in $H_{\delta_0}$, hence it is a minimizer over $H_{\delta_0}$. Conversely, let us take $u\in H_{\delta_0}$ a minimizer over $H_{\delta_0}$, and let us consider in addition $v\in H_0$. Without loss of generality assume $\mathcal{F}(v)\le2\tilde{\mathcal{F}}$. We will prove that there exists a sequence $w_k$ of inward minimizers such that
\[
\mathcal{F}(w_k)\le\mathcal{F}(v)+\frac{C}{k^{q-1}}.
\]
We first construct a family of functions $v_a\in H_a$ such that 
\[
\mathcal{F}(v_a)\le\mathcal{F}(v)+r(a),
\]
with $\lim_{a\to 0}r(a)=0$. Let $0<a<1$, and let ${v_a=v\chi_{\set{v\ge a}\cap B_{\rho(a)}}}$, where $\rho(a)=a^{1-q}$, we have
\begin{equation}
\label{eq: vtrunc}
\begin{split}
\mathcal{F}(v_a)-\mathcal{F}(v)&\le     \beta\int_{\partial^*(\set{v\ge a}\cap B_{\rho(a)})\setminus J_v}v^q\,d\Hn\\[5 pt]
&\le\beta a^q P(\set{v\ge a})+\beta \int_ {(\partial B_{\rho(a)}\cap \set{v\ge a})\setminus J_v} v^q\,d\Hn  \\[5 pt]
&\le \beta a^{q}\left( P(\set{v\ge a})+\frac{1}{a^q}\int_ {(\partial B_{\rho(a)}\cap \set{v\ge a})\setminus J_v} v\,d\Hn\right).
\end{split}
\end{equation}
In order to estimate the right-hand side, fix $t\in(0,1)$, and observe that by the coarea formula
\begin{equation}
\label{eq: coarea}
\int_0^tP(\set{v\ge a})\, da\le \abs{Dv}(\R^n),
\end{equation}
while, with a change of variables,
\[\int_0^t\frac{1}{a^q}\int_ {(\partial B_{\rho(a)}\cap \set{v\ge a})\setminus J_v} v\,d\Hn\,da\le(q-1)\int_0^{+\infty}\int_{\partial B_r\setminus J_v}v \,d\Hn\,dr=(q-1)\norma{v}_{L^1(\R^n)}.\]
\[
\int_0^t\left( P(\set{v\ge a})+\frac{1}{a^q}\int_ {(\partial B_{\rho(a)}\cap \set{v\ge a})\setminus J_v} v\,d\Hn\right)\,da\le q \norma{v}_{\bv}.
\]
By mean value theorem, for every $k\in\mathbb{N}$ we can find $a_k\le 1/k$ such that
\[
P(\set{v\ge a_k})+\frac{1}{a_k^q}\int_ {(\partial B_{\rho(a_k)}\cap \set{v\ge a_k})\setminus J_v} v\,d\Hn\le \frac{q\norma{v}_{\bv}}{a_k},
\]
and in \eqref{eq: vtrunc} we get
\[
\mathcal{F}(v_{a_k})\le \mathcal{F}(v)+q\beta  a_k^{q-1}\norma{v}_{\bv}\le\mathcal{F}(v)+q\beta\frac{\norma{v}_{\bv}}{k^{q-1}}.
\]
We now construct the aforementioned sequence of inward minimizers: let us consider the functional
\[
\mathcal{G}_k(A)=\mathcal{F}(v_{a_k}\chi_A),
\]
with $A$ containing $\Omega$ and contained in $\set{v_{a_k}>0}$. If we consider $A_j$ a minimizing sequence for $\mathcal{G}_k$, then they are certainly equibounded. Moreover, 
\[
\begin{split}
\mathcal{G}_k(A_j)&\ge \Ln(A_j\setminus \Omega)+\beta\int_{J_{\chi_{A_j} v_{a_k}}}\left(\underline{\chi_{A_j} v_{a_k}}^{q}+\overline{\chi_{A_j} v_{a_k}}^{\,q}\right)\,d\Hn \\[5 pt]
&\ge \Ln(A_j)+\beta a_k^q \Hn(J_{\chi_{A_j} v_{a_k}})-\Ln(\Omega).
\end{split}
\]
Notice in addition that since $v_{a_k}\ge a_k$ on its support, then the jump set $J_{\chi_{A_j} v_{a_k}}$ clearly contains $\partial^* A_j$. We now have that $\chi_{A_j}$ satisfies the conditions of \autoref{teor:compact}, and eventually extracting a subsequence we can suppose that 
\[
A_j\xrightarrow[]{L^1} A^{(k)},
\]
with a suitable $A^{(k)}$, and moreover, letting $w_k=\chi_{A^{(k)}} v_{a_k}$, we have
\[
\mathcal{F}(w_k)\le\inf_{\Omega\subseteq A \subseteq \set{v_{a_k}>0}}\mathcal{G}_k(A)\le \mathcal{F}(v_{a_k})\le \mathcal{F}(v)+q\beta \frac{\norma{v}_{\bv}}{k^{q-1}}.
\]
By construction $w_k$ is an inward minimizer, therefore we have $w_k\in H_{\delta_0}$, and consequently, we can compare it with $u$, obtaining
\[
\mathcal{F}(u)\le \mathcal{F}(w_k)\le \mathcal{F}(v)+q\beta\frac{\norma{v}_{\bv}}{k^{q-1}}.
\]
Letting $k$ go to infinity we get the thesis.
\end{proof}
\end{prop}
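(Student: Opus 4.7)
The plan is to prove the two implications separately. The forward direction is essentially bookkeeping: if $u$ minimizes $\mathcal{F}$ over $H_0$, then for every set of finite perimeter $A\supseteq\Omega$ the function $u\chi_A$ still lies in $H_0$, so $u$ is in particular an inward minimizer. Moreover the comparison $\mathcal{F}(u)\le\mathcal{F}(\chi_B)=\tilde{\mathcal{F}}$ puts $u$ within the hypothesis of \autoref{teor: lower-bound} and \autoref{cor: density0}, which force $u\in H_{\delta_0}$. Since $H_{\delta_0}\subseteq H_0$, $u$ trivially minimizes over $H_{\delta_0}$ as well.

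The converse is the substantive direction. Given $u\in H_{\delta_0}$ minimizing over $H_{\delta_0}$ and an arbitrary competitor $v\in H_0$ (which I may assume satisfies $\mathcal{F}(v)\le 2\tilde{\mathcal{F}}$, the opposite case being immediate from $\mathcal{F}(u)\le\tilde{\mathcal{F}}$), the strategy is to produce a sequence of inward minimizers $w_k$ with $\mathcal{F}(w_k)\to\mathcal{F}(v)$ from above. Each such $w_k$ will land in $H_{\delta_0}$ by the results of \autoref{section: lower}, so comparing $u$ with $w_k$ and letting $k\to\infty$ yields $\mathcal{F}(u)\le\mathcal{F}(v)$.

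The first step is to bring $v$ into some $H_a$ by a double truncation: setting $v_a:=v\chi_{\set{v\ge a}\cap B_{\rho(a)}}$ with $\rho(a)=a^{1-q}$, the energy increment is concentrated on new boundary pieces over $\set{v=a}$ and on $\partial B_{\rho(a)}$, and is controlled by $\beta a^q P(\set{v\ge a})+\beta\int_{\partial B_{\rho(a)}\cap\set{v\ge a}}v^q\,d\Hn$. Averaging in $a\in(0,t)$, the coarea formula bounds the integral of the perimeter term by $\abs{Dv}(\R^n)$, while a change of variables $r=\rho(a)$ bounds the spherical term by a multiple of $\norma{v}_{L^1}$. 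A mean value argument then produces $a_k\le 1/k$ for which $\mathcal{F}(v_{a_k})\le\mathcal{F}(v)+Cq\beta\norma{v}_{\bv}/k^{q-1}$.

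The second step is to turn $v_{a_k}$ into a genuine inward minimizer $w_k$. I would minimize the auxiliary functional $\mathcal{G}_k(A):=\mathcal{F}(v_{a_k}\chi_A)$ over sets of finite perimeter with $\Omega\subseteq A\subseteq\set{v_{a_k}>0}$, and take $w_k:=v_{a_k}\chi_{A^{(k)}}$ at the optimum. The observation that makes compactness work is that $v_{a_k}\ge a_k$ on its support, so the jump set of any competitor contains $\partial^* A_j$ with jump values bounded below by $a_k$; the boundary integral in $\mathcal{G}_k(A_j)$ therefore dominates $\beta a_k^q\Hn(\partial^* A_j)$, while the volume term controls $\Ln(A_j)$. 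These uniform bounds let me apply the $\sbv$ compactness of \autoref{teor:compact} to $\chi_{A_j}v_{a_k}$ and obtain $A^{(k)}$ via lower semicontinuity. I expect the main obstacle to be coordinating these lower bounds so that the jump-set and Lebesgue terms pass to the limit simultaneously, thereby certifying $w_k$ as a minimizer of $\mathcal{G}_k$ (hence inward minimizing) and closing the chain $\mathcal{F}(u)\le\mathcal{F}(w_k)\le\mathcal{F}(v_{a_k})\le\mathcal{F}(v)+C/k^{q-1}$.
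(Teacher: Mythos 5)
Your proposal is correct and follows essentially the same route as the paper: truncation to $v_a=v\chi_{\set{v\ge a}\cap B_{\rho(a)}}$, a coarea/mean-value selection of good levels $a_k\le 1/k$, and then the auxiliary shape optimization $\mathcal{G}_k(A)=\mathcal{F}(v_{a_k}\chi_A)$ whose minimizer yields an inward minimizer $w_k\in H_{\delta_0}$ to compare with $u$. The compactness step you flag as the "main obstacle" is handled in the paper exactly as you sketch it, via the lower bound $\mathcal{G}_k(A_j)\ge \Ln(A_j)+\beta a_k^q\Hn(J_{\chi_{A_j}v_{a_k}})-\Ln(\Omega)$ and the $\sbv$ compactness theorem.
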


\begin{prop}
\label{teor: exist}
There exists a minimizer for problem~\eqref{problema}.
\end{prop}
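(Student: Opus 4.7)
The plan is to apply the direct method on the restricted class $H_{\delta_0}$, whose equivalence with the original problem on $H_0$ is guaranteed by \autoref{prop:H_0H_a}. I would take a minimizing sequence $u_k\in H_{\delta_0}$ and, by comparison with the trivial competitor $\chi_B\in H_{\delta_0}$, assume $\mathcal{F}(u_k)\le 2\tilde{\mathcal{F}}$ for all $k$. The point is that the three hypotheses of \autoref{teor:compact} are all uniformly satisfied: $\norma{u_k}_\infty\le 1$ by definition of $H_{\delta_0}$, $\int_{\R^n}\abs{\nabla u_k}^p\,d\Ln\le 2\tilde{\mathcal{F}}$ from the energy bound, and, crucially, since $u_k$ takes values in $\set{0}\cup[\delta_0,1]$ at least one of the two traces $\underline{u}_k,\overline{u}_k$ is $\ge\delta_0$ at every jump point, whence
\[
\beta\delta_0^q\,\Hn(J_{u_k})\le\beta\int_{J_{u_k}}\bigl(\underline{u}_k^{\,q}+\overline{u}_k^{\,q}\bigr)\,d\Hn\le 2\tilde{\mathcal{F}}.
\]

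\autoref{teor:compact} then yields a subsequence (not relabelled) converging in $L^1_{\loc}(\R^n)$ to some $u\in\sbv(\R^n)$. Passing to a further subsequence converging $\Ln$-almost everywhere, the membership $u\in H_{\delta_0}$ follows immediately: the quantization $u\in\set{0}\cup[\delta_0,1]$, the Dirichlet datum $u=1$ on $\Omega$, and the support constraint $\supp u\subseteq B_{\rho(\delta_0)}$ are all preserved by a.e. limits.

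It remains to establish $\mathcal{F}(u)\le\liminf_{k}\mathcal{F}(u_k)$. The lower semicontinuity of the bulk and jump terms is delivered verbatim by \autoref{teor:compact}. The term I would treat with care is the volume $\Ln(\set{u>0}\setminus\Omega)$, which is not semicontinuous under $L^1$ convergence in general. Here, however, the quantization saves us: at any point $x$ of a.e. convergence with $u(x)>0$ one has $u(x)\ge\delta_0$, so $u_k(x)\to u(x)\ge\delta_0$ forces $u_k(x)>0$ for all sufficiently large $k$, giving
\[
\chi_{\set{u>0}}(x)\le\liminf_{k\to\infty}\chi_{\set{u_k>0}}(x)
\]
pointwise $\Ln$-a.e., and Fatou's lemma concludes. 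The only genuine obstacle is thus this last step, and the whole argument only works because of the lower bound machinery of \autoref{section: lower}, which forces minimizing sequences to live in $H_{\delta_0}$ and confers the quantization that makes the otherwise non-semicontinuous volume term lower semicontinuous.
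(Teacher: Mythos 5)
Your proof is correct and follows essentially the same route as the paper: reduction to $H_{\delta_0}$ via \autoref{prop:H_0H_a}, the bound $\beta\delta_0^q\Hn(J_{u_k})\le\mathcal{F}(u_k)\le 2\tilde{\mathcal{F}}$ to invoke \autoref{teor:compact}, and lower semicontinuity of the energy. The only (welcome) addition is that you spell out the semicontinuity of the volume term, which the paper leaves implicit; note though that the pointwise inequality $\chi_{\set{u>0}}\le\liminf_k\chi_{\set{u_k>0}}$ holds for any a.e.\ convergent sequence, so the quantization is not actually needed for that step — it is needed only for the jump-set bound.
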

\begin{proof}
By \autoref{prop:H_0H_a} and \autoref{teor: lower-bound} it is enough to find a minimizer in $H_{\delta_0}$. Let $u_k$ be a minimizing sequence in $H_{\delta_0}$, then, for $k$ large enough, we have
\[\beta\delta_0^q\Hn(J_{u_k})+\int_{\R^n}\abs{\nabla u_k}^p\,d\Ln\leq \mathcal{F}(u_k)\leq2\tilde{\mathcal{F}}.\]
From \autoref{teor:compact} we have that there exists $u\in H_{\delta_0}$ such that, up to a subsequence, $u_k$ converges to $u$ in $L^1_{\loc}$ and
\[\mathcal{F}(u)\leq\liminf_{k} \mathcal{F}(u_k),\]
therefore $u$ is a solution.
\end{proof}
\begin{proof}[Proof of \autoref{teor: main1}]
The result is obtained by joining \autoref{teor: exist} and \autoref{teor: lower-bound}.
\end{proof}
\section{Density estimates}
\label{section: density}
In this section, we prove the density estimates in \autoref{teor: main2} by adapting techniques used in \cite{CK} analogous to classical ones used in \cite{Giorgi} to prove density estimates for the jump set of almost-quasi minimizers of the Mumford-Shah functional. 
\begin{defi}
\label{defi: localminimizer}
Let $u\in\sbv(\R^n)$ be a function such that $u=1$ a.e. in $\Omega$. We say that $u$ is a \emph{local minimizer} for $\mathcal{F}$ on a set of finite perimeter $E\subset\R^n\setminus\Omega$, if \[\mathcal{F}(u)\leq\mathcal{F}(v),\] for every $v\in\sbv(\R^n)$ such that $u-v$ has support in $E$.
\end{defi}
Let $E$ be a set of finite perimeter. We introduce the notation
\[\mathcal{F}(u;E)=\int_E \abs{\nabla u}^p\,d\Ln+\beta\int_{J_u\cap E} \left(\overline{u}^q+\underline{u}^q\right)\,d\Hn+\Ln\left(\set{u>0}\cap E\right).\]
To prove \autoref{teor: main2} we will use the following Poincaré-Wirtinger type inequality whose proof can be found in \cite[Theorem 3.1 and Remark 3.3]{Giorgi}.
Let $\gamma_n$  be the isoperimetric constant relative to the balls of $\R^n$, i.e.
\[\min\Set{\Ln(E\cap B_r)^{\frac{n-1}{n}},\Ln(E\setminus B_r)^{\frac{n-1}{n}}}\leq \gamma_n P(E;B_r),\]
for every Borel set $E$, then
\begin{prop}
\label{lem: PW}
Let $p\ge1$ and let $u\in\sbv(B_r)$ such that 
\begin{equation}\label{eq:ipotesidelca} \left(2\gamma_n \Hn(J_u\cap B_r)\right){^\frac{n}{n-1}}<\dfrac{\Ln(B_r)}{2},\end{equation}
 Then there exist numbers $-\infty<    \tau^-\leq m\leq \tau^+<+\infty$ such that the function \[\tilde{u}=\max\set{\min\set{u,\tau^+},\tau^-},\] satisfies
\[\norma{\tilde{u}-m}_{L^p}\leq C \norma{\nabla u}_{L^p}\]
and
\[\Ln(\set{u\neq\tilde{u}})\leq C\left(\Hn(J_u\cap B_r)\right)^{\frac{n}{n-1}},\]
where the constants depend only on $n$, $p$, and $r$.
\end{prop}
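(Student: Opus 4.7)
The plan is to construct the thresholds $\tau^\pm$ as critical levels of the distribution function of $u$ and then verify the two estimates in turn. I set $A:=\bigl(2\gamma_n\Hn(J_u\cap B_r)\bigr)^{n/(n-1)}$, so that \eqref{eq:ipotesidelca} reads $2A<\Ln(B_r)$, and define
\[
\tau^+:=\inf\set{t\in\R | \Ln(\set{u>t}\cap B_r)\le A},\qquad \tau^-:=\sup\set{t\in\R | \Ln(\set{u<t}\cap B_r)\le A}.
\]
Both are finite because $u\in L^1(B_r)$, and $\tau^-\le\tau^+$ since $2A<\Ln(B_r)$. I then pick $m\in[\tau^-,\tau^+]$ to be a median of $\tilde u$ in $B_r$, so that $\set{\tilde u>m}\cap B_r$ and $\set{\tilde u<m}\cap B_r$ each have Lebesgue measure at most $\Ln(B_r)/2$.

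The second estimate is immediate from the construction: $\set{u\neq\tilde u}\subseteq\set{u>\tau^+}\cup\set{u<\tau^-}$, hence $\Ln(\set{u\neq\tilde u})\le 2A=C(n)\Hn(J_u\cap B_r)^{n/(n-1)}$. For the Poincaré-type bound, the crucial ingredient is a pointwise level-set estimate. For $t\in[m,\tau^+)$ one has $\Ln(\set{u>t}\cap B_r)\in(A,\Ln(B_r)/2]$; splitting the essential boundary into a jump part and a ``smooth'' part and applying the relative isoperimetric inequality yields
\[
\Ln(\set{u>t}\cap B_r)^{(n-1)/n}\le\gamma_n P(\set{u>t};B_r)\le\gamma_n P(\set{u>t};B_r\setminus J_u)+\gamma_n\Hn(J_u\cap B_r),
\]
and since the left-hand side exceeds $A^{(n-1)/n}=2\gamma_n\Hn(J_u\cap B_r)$, the jump contribution is absorbed into the smooth perimeter, giving
\[
\Ln(\set{u>t}\cap B_r)^{(n-1)/n}\le 2\gamma_n P(\set{u>t};B_r\setminus J_u)
\]
for $t\in[m,\tau^+)$, with an analogous inequality on sublevel sets for $t\in(\tau^-,m]$. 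This is the point where the hypothesis \eqref{eq:ipotesidelca} is essential: it ensures the threshold $A$ is large enough relative to $\Hn(J_u\cap B_r)$ to make absorption work, yet small enough compared with $\Ln(B_r)$ so that the relative isoperimetric inequality is available on the correct side.

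To conclude, I represent $\norma{\tilde u-m}_{L^p(B_r)}^p$ via the layer cake formula, substitute the level-set inequality above, and invoke the coarea identity for $\sbv$ functions, $\int_\R P(\set{u>t};B_r\setminus J_u)\,dt=\int_{B_r}\abs{\nabla u}\,d\Ln$, which holds because $D^c u=0$. This yields directly the $L^1$-type Sobolev--Poincaré bound $\norma{\tilde u-m}_{L^{1^*}(B_r)}\le C(n)\norma{\nabla u}_{L^1(B_r)}$, and the full $L^p$ statement follows by a H\"older/truncation argument on the layer cake integral, exactly as in the classical proof of the $\bv$ Sobolev--Poincaré inequality on a ball. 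I expect the main obstacle to be precisely this last step: carefully upgrading the $L^1$-level-set estimate to a genuine $L^p$-to-$L^p$ inequality with constants $C=C(n,p,r)$ that depend on the diameter of $B_r$ in an explicit way; the geometric and ``$\sbv$'' content sits entirely in the absorption argument of the previous paragraph, whereas the $L^p$ conversion is a technical piece to be borrowed from the classical $\bv$ theory.
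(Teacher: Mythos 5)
Your argument is correct and is essentially the proof the paper points to: the paper does not prove this proposition itself but cites De Giorgi--Carriero--Leaci, and your construction of $\tau^{\pm}$ via the distribution function at the threshold $\left(2\gamma_n\Hn(J_u\cap B_r)\right)^{n/(n-1)}$, the absorption of the jump term into the relative isoperimetric inequality, and the coarea identity on $B_r\setminus J_u$ reproduce exactly that argument. The $L^p$ upgrade you defer to at the end is indeed the standard one (apply the same level-set estimate to powers $\abs{\tilde u-m}^s$ and bootstrap with H\"older on the bounded ball, as in the cited Remark 3.3), so no essential idea is missing.
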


\begin{lemma}
\label{lem: density}
Let $u\in H_s$ be a local minimizer on $B_r(x)$ in the sense of definition \autoref{defi: localminimizer}. For sufficiently small values of $\tau$, there exist values $r_0,\eps_0$ depending only on $n,\tau,\beta,p,q$ and $s$ such that, if $r<r_0$, 
\[\Hn(J_u\cap B_r(x))\leq \eps_0 r^{n-1},\]
and
\[\mathcal{F}(u;B_r(x))\geq r^{n-\frac{1}{2}},\]
then
\[\mathcal{F}(u;B_{\tau r}(x))\leq \tau^{n-\frac{1}{2}}\mathcal{F}(u;B_r(x)).\]
\end{lemma}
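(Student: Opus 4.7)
The argument is a De Giorgi-type small-scale decay estimate in the spirit of \cite{Giorgi,CK}. First I apply the Poincar\'e-Wirtinger inequality \autoref{lem: PW} to $u$ on $B_r(x)$: with $\eps_0$ small enough to guarantee \eqref{eq:ipotesidelca}, this yields truncation thresholds $\tau^\pm$, a constant $m$, and a function $\tilde u$ with
\[
\norma{\tilde u-m}_{L^p(B_r(x))} \le C\,\norma{\nabla u}_{L^p(B_r(x))}, \qquad \Ln(\set{u\neq\tilde u}) \le C\,\eps_0^{n/(n-1)}r^n.
\]
Since $u\in H_s$ takes values in $\set{0}\cup[s,1]$, the argument behind \autoref{oss: incljump} forces $\partial^*\set{u>0}\subseteq J_u$, so $P(\set{u>0};B_r(x))\le\eps_0\, r^{n-1}$; the relative isoperimetric inequality then produces a dichotomy: either $\Ln(\set{u>0}\cap B_r(x))$ or $\Ln(\set{u=0}\cap B_r(x))$ is bounded by $C\,\eps_0^{n/(n-1)}r^n$.

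A Fubini/mean-value argument on radii $\rho\in(\tau r,2\tau r)$ then selects a good sphere $\partial B_\rho(x)$ on which $\Hn(J_u\cap\partial B_\rho)=0$ and the shell quantities---$\Hn(\partial B_\rho\cap\set{u\neq\tilde u})$, the boundary integral of $\abs{\tilde u-m}^p$, and the surface measure of either $\set{u>0}$ or $\set{u=0}$---are each bounded by $(\tau r)^{-1}$ times their integrals on $B_{2\tau r}\setminus B_{\tau r}$. The competitor $v$ coincides with $u$ outside $B_\rho(x)$. In the first case of the dichotomy I set $v\equiv 0$ in $B_\rho(x)$, killing $\mathcal F(u;B_\rho(x))$ at the price of a boundary jump on $\partial B_\rho\cap\set{u>0}$ whose $q$-energy is at most $\Hn(\partial B_\rho\cap\set{u>0})\lesssim\eps_0^{n/(n-1)}\,r^{n-1}/\tau$. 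In the second case I set $v=m$ on the small bad set $B_\rho(x)\cap(\set{u\neq\tilde u}\cup\set{u=0})$ and $v=u$ elsewhere in $B_\rho(x)$, so that the newly created inner jumps lie inside $J_u\cap B_\rho$ (with a $q$-energy increase of at most $m^q\,\Hn(J_u\cap B_\rho)\le \eps_0\, r^{n-1}$), the new boundary jumps lie in $\partial B_\rho\cap\set{u\neq\tilde u}$ (small by the Fubini choice), and the added volume is at most $\eps_0^{n/(n-1)}\, r^n$. Both competitors satisfy the requirement of \autoref{defi: localminimizer} since $B_r(x)\subseteq\R^n\setminus\Omega$, so local minimality yields
\[
\mathcal F(u;B_{\tau r}(x))\le\mathcal F(u;B_\rho(x))\le C(\tau,\beta,p,q)\,\eps_0\, r^{n-1}.
\]

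Combining this with the hypothesis $\mathcal F(u;B_r(x))\ge r^{n-1/2}$ and writing $r^{n-1}=r^{1/2}\cdot r^{n-1/2}$, the right-hand side is bounded by $\tau^{n-1/2}\,\mathcal F(u;B_r(x))$ provided $\eps_0$ is first chosen small in terms of $\tau$ and then $r_0$ small in terms of $\tau$ and $\eps_0$. The main obstacle is the nonlinearity of the surface term $\overline v^q+\underline v^q$: since it weighs each side of a jump separately rather than via a difference $\abs{[\,v\,]}^q$, the PW closeness $\tilde u\approx m$ cannot be exploited to make a new seam automatically cheap, and this is why the dichotomy above is essential. In the second case, the inclusion $\partial^*\set{u>0}\subseteq J_u$ is precisely what allows the newly created inner jumps to be absorbed into $J_u\cap B_\rho$ at a cost controlled by $\eps_0$, while the bad set $\set{u\neq\tilde u}$ is small thanks to PW. A secondary technical point is the simultaneous control, in the good-radius selection, of the $p$-Dirichlet trace, the bad-set trace, and the PW $L^p$-distance, which is handled by averaging the sum of all the needed quantities over $(\tau r,2\tau r)$.
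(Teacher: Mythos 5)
Your overall strategy is not the paper's (which runs a blow-up/compactness contradiction argument), and as written it cannot close: the final absorption step rests on a false identity. You write $r^{n-1}=r^{1/2}\cdot r^{n-1/2}$, but in fact $r^{1/2}\cdot r^{n-1/2}=r^{n}$; the correct factorization is $r^{n-1}=r^{-1/2}\cdot r^{n-1/2}$. Consequently, to deduce $C\,\eps_0\,r^{n-1}\le \tau^{n-1/2}\,\mathcal F(u;B_r(x))$ from the hypothesis $\mathcal F(u;B_r(x))\ge r^{n-1/2}$ you would need $\eps_0\le C^{-1}\tau^{n-1/2}r^{1/2}$, which fails for every fixed $\eps_0>0$ once $r$ is small -- precisely the regime of the lemma. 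The point is that the hypothesis only bounds the energy below by $r^{n-1/2}$, which for small $r$ is far \emph{smaller} than the surface-scale cost $\eps_0 r^{n-1}$ of your competitors; that hypothesis is designed to make the \emph{volume} term $\sim r^n=r^{1/2}\cdot r^{n-1/2}$ negligible relative to the energy, not to absorb surface costs. A second structural problem: in the second branch of your dichotomy the competitor coincides with $u$ on most of $B_{\tau r}(x)$, so the comparison leaves $\int_{B_{\tau r}}\abs{\nabla u}^p$ completely untouched, whereas the conclusion requires the full energy $\mathcal F(u;B_{\tau r}(x))$, Dirichlet part included, to decay by the factor $\tau^{n-1/2}$. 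Decay of the Dirichlet part can only come from comparison with a $p$-harmonic replacement together with its interior regularity; no truncation/cut-off competitor built from $u$ itself can produce it. (There is also a smaller issue: the "new inner jumps" created by setting $v=m$ on $\set{u\neq\tilde u}$ sit on $\partial^*\set{u>\tau^+}\cup\partial^*\set{u<\tau^-}$, which is a level-set boundary of the absolutely continuous part of $u$ and is not contained in $J_u$; its $\Hn$-measure is not controlled by $\eps_0 r^{n-1}$ by the Poincar\'e--Wirtinger lemma, which only bounds the Lebesgue measure of $\set{u\neq\tilde u}$.)

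The paper avoids all of this by arguing by contradiction and blowing up: it takes a sequence of putative counterexamples $u_k$ on $B_{r_k}$, normalizes $v_k(x)=u_k(r_kx)/E_k^{1/p}$ with $E_k=r_k^{p-n}\mathcal F(u_k;B_{r_k})$, applies \autoref{lem: PW} and $\sbv$ compactness to extract a limit $v\in W^{1,p}(B_1)$, and then shows via cut-off competitors (handling separately the cases $\limsup_k E_k>0$ and $E_k\to0$) that $v$ is $p$-harmonic with $\alpha(\rho)=\int_{B_\rho}\abs{\nabla v}^p$. The contradiction comes from $\alpha(\tau)\ge\tau^{n-1/2}$ versus the local Lipschitz bound $\int_{B_\tau}\abs{\nabla v}^p\le C\tau^n$ for $p$-harmonic functions. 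The normalization by $E_k$ is exactly the device that turns quantities which are merely $o(r_k^{n-1})$ or $o(1)\cdot\mathcal F(u_k;B_{r_k})$ into errors that vanish relative to the energy, which is what your direct argument is missing. If you want to salvage a direct proof you would have to replace $u$ inside a good sphere by its $p$-harmonic extension and quantify the excess via the small jump set, which essentially reconstructs the blow-up argument.
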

\begin{proof}
Without loss of generality, assume $x=0$. Assume by contradiction that the conclusion fails, then for every $\tau>0$ there exists a sequence $u_k\in H_s$ of local minimizers on $B_{r_k}$, with $\lim_{k}r_k=0$, such that
\[\dfrac{\Hn(J_{u_k}\cap B_{r_k})}{r_k^{n-1}}=\eps_k,\]
with $\lim_k \eps_k=0$, 
\begin{equation}\label{eq :ipotesi1 lemma}\mathcal{F}(u_k;B_{r_k})\geq r_k^{n-\frac{1}{2}},\end{equation}
and yet
\begin{equation}\label{eq :ipotesi2} \mathcal{F}(u_k;B_{\tau r_{k}})>\tau^{n-\frac{1}{2}}\mathcal{F}(u_k;B_{r_k}).\end{equation}
For every $t\in[0,1]$, we define the sequence of monotone functions
\[\alpha_k(t)=\dfrac{\mathcal{F}(u_k;B_{t r_{k}})}{\mathcal{F}(u_k,B_{r_k})}\le 1.\]
By compactness of $\bv([0,1])$ in $L^1([0,1])$, we can assume that, up to a subsequence, $\alpha_k$ converges $\mathcal{L}^1$-almost everywhere to a monotone function $\alpha$. Moreover, notice that, by~\eqref{eq :ipotesi2}, for every $k$ \begin{equation}\label{eq:alpha>}\alpha_k(\tau)>\tau^{n-\frac{1}{2}}.\end{equation} 
Our final aim is to prove that there exists a $p$-harmonic function $v\in W^{1,p}(B_1)$ such that for every $t$
\[
\lim_{k\to+\infty}\alpha_k(t)=\alpha(t)=\int_{B_t}\abs{\nabla v}^p\,d\Ln.
\]
Let 
\[E_k=r_k^{p-n}\mathcal{F}(u_k;B_{r_k}), \qquad \qquad v_k(x)=\dfrac{u_k(r_k x)}{E_k^{1/p}}.\]
Then $v_k\in\sbv(B_1)$, and
\[\int_{B_1}\abs{\nabla v_k}^p\,d\Ln\leq1,\qquad \qquad\Hn(J_{v_k}\cap B_1)=\eps_k.\]
Thus, applying the Poincaré-Wirtinger type inequality in \autoref{lem: PW} to functions $v_k$ we obtain truncated functions $\tilde{v}_k$ and values $m_k$, such that
\[\int_{B_1}\abs{\tilde{v}_k-m_k}^p\,d\Ln\leq C\]
and
\begin{equation}
\label{eq: noteqmeas}
\Ln(\set{v_k\neq \tilde{v_k}})\leq C\left(\Hn(J_{v_k}\cap B_1)\right)^{\frac{n}{n-1}}\leq C\eps_k^{\frac{n}{n-1}}.
\end{equation}\\
We\marcomm{\textbf{Step 1:}} prove that there exists $v\in W^{1,p}(B_1)$ such that
\[
\tilde{v}_k-m_k\xrightarrow{L^p(B_1)}{v},
\]
\begin{equation}
\label{}
    \int_{B_\rho}\abs{\nabla v}^p\,d\Ln\le\alpha(\rho),\qquad \text{for $\mathcal{L}^1$-a.e. $\rho<1$,}
\end{equation}
and
\begin{equation}
\label{eq:resto}
\lim_k \dfrac{r_k^{p-1}}{E_k}\Hn(\set{v_k\neq\tilde{v}_k}\cap\partial B_\rho)=0, \qquad \text{for $\mathcal{L}^1$-a.e. $\rho<1$.}
\end{equation}
Notice that 
\[
\int_{B_1}\abs{\nabla (\tilde{v}_k-m_k)}^p\,d\Ln\le\int_{B_1}\abs{\nabla v_k}^p\,d\Ln\le 1,
\]
since $\tilde{v}_k$ is a truncation of $v$. From compactness theorems in $\sbv$ (see for instance \cite[Theorem 3.5]{Giorgi}), we have that $\tilde{v}_k-m_k$ converges in $L^p(B_1)$ and $\Ln$-almost everywhere to a function $v\in W^{1,p}(B_1)$, since $\Hn(J_{\tilde{v}_k})$ goes to $0$ as $k\to+\infty$. Moreover, for every $\rho<1$,
\[\int_{B_\rho}\abs{\nabla v}^p\,d\Ln\leq\liminf_{k}\int_{B_\rho}\abs{\nabla \tilde{v}_k}^p\,d\Ln,\]
and
\[\int_{B_\rho}\abs{\nabla v}^p\,d\Ln\leq\liminf_{k}\int_{B_\rho}\abs{\nabla \tilde{v}_k}^p\,d\Ln\leq\liminf_{k}\alpha_k(\rho)=\alpha(\rho),\]
since by definition
\[
\int_{B_\rho}\abs{\nabla v_k}^p\,d\Ln=\frac{r_k^{p-n}}{E_k}\int_{B_{\rho r_k}}\abs{\nabla u_k}^p\,d\Ln\le\frac{r_k^{p-n}}{E_k}\mathcal{F}(u_k;B_{\rho r_k})\le\alpha_k(\rho).
\]
Finally, up to a subsequence,
\[\lim_{k} \dfrac{r_k^{p-1}}{E_k}\Ln(\set{v_k\neq\tilde{v}_k})=0.\]
Indeed, by \eqref{eq: noteqmeas},
\[\dfrac{r_k^{p-1}}{E_k}\Ln(\set{v_k\neq\tilde{v}_k})\leq C\dfrac{r_k^{p-1}}{E_k}\eps_k^{\frac{n}{n-1}},\]
which tends to zero as long as $r_k^{p-1}/E_k$ is bounded. On the other hand, if $r_k^{p-1}/E_k$ diverges, we could use the fact that $\eps_k\leq s^{-q} \mathcal{F}(u_k;B_{r_k})r_k^{1-n}$ and get
\[\dfrac{r_k^{p-1}}{E_k}\Ln(\set{v_k\neq\tilde{v}_k})\leq C \dfrac{r_k^{p-1}}{E_k}\left(\dfrac{E_k}{r_k^{p-1}}\right)^{\frac{n}{n-1}}\]
which goes to zero. Using Fubini's theorem we have \eqref{eq:resto}.
\bigskip

\noindent Let $\tilde{u}_k(x)=E_k^{1/p}\tilde{v}_k(\frac{x}{r_k})$, and for every $t\in[0,1]$ we define
\[\tilde{\alpha}_k(t)=\dfrac{\mathcal{F}(\tilde{u}_k;B_{t r_{k}})}{\mathcal{F}(u_k,B_{r_k})}.\]
The $\tilde{\alpha}_k$ functions are also monotone and bounded: the jump set of $\tilde{u}_k$ is contained in $J_{u_k}$, 
therefore we can write
\[\tilde{\alpha}_k(t)\leq\alpha_k(t)+\dfrac{2\beta\Hn(J_{u_k}\cap B_{t r_{k}})}{\mathcal{F}(u_k;B_{r_k})}\leq\left(1+\dfrac{2}{s^q}\right)\alpha_k(t),\]
using the fact that $u_k\in H_s$. As done for $\alpha_k$, we can assume that $\tilde{\alpha}_k$ converges $\mathcal{L}^1$-almost everywhere to a function $\tilde{\alpha}$.\bigskip

\noindent Let\marcomm{\textbf{Step 2:}} $I\subset [0,1]$ be the set of values $\rho$ for which~\eqref{eq:resto} holds, $\alpha_k$ and $\tilde{\alpha}_k$ converge and $\alpha$ and $\tilde{\alpha}$ are continuous. Notice that $\mathcal{L}^1(I)=1$. Fix $\rho,\rho'\in I$ with $\rho<\rho'<1$ and let
\[\mathcal{I}_k(\xi)=\beta E_k^{q/p-1}r_k^{p-1}\int_{J_{\xi}\cap( B_{\rho'}\setminus B_\rho)}\left(\overline{\xi}^q+\underline{\xi}^q\right)\,d\Hn,\]
with $\xi\in\sbv(B_1)$. Let $w\in W^{1,p}(B_1)$ and consider $\eta$ a smooth cutoff function supported on $B_{\rho'}$ and identically equal to $1$ in $B_\rho$. Let
\[\varphi_k=((w+m_k)\eta+\tilde{v}_k(1-\eta))\chi_{B_{\rho'}}+v_k\chi_{B_1\setminus B_{\rho'}}.\]
We want to prove that
\begin{equation}\label{eq: alpha tilde}\tilde{\alpha}_k(\rho')-\tilde{\alpha}_k(\rho)\ge \int_{B_{\rho'}\setminus B_\rho} \abs{\nabla\tilde{v}_k}^p\,d\Ln+\mathcal{I}_k(\tilde{v}_k),\end{equation}
and
\begin{equation}\label{eq: alpha k}\alpha_k(\rho')\le R_k+\int_{B_{\rho'}}\abs{\nabla \varphi_k}^p\,d\Ln+\mathcal{I}_k(\varphi_k),\end{equation}
where $R_k$ goes to zero as $k$ goes to infinity.
We immediately compute
\[\begin{split}\tilde{\alpha}_k(\rho')-\tilde{\alpha}_k(\rho)=&\mathcal{F}(u_k;B_{r_k})^{-1}\left[\int_{B_{\rho' r_{k}}\cap B_{\rho r_{k}}}\abs{\nabla\tilde{u}_k}^p\,d\Ln+\beta\int_{J_{\tilde{u}_k}\cap (B_{\rho' r_{k}}\setminus B_{\rho r_{k}})}\left(\overline{\tilde{u}_k}^q+\underline{\tilde{u}_k}^q\right)\,d\Hn\right]\\[5 pt]&+\mathcal{F}(u_k;B_{r_k})^{-1}\Ln(\set{\tilde{u}_k>0}\cap (B_{\rho' r_k}\setminus B_{\rho r_k}))\\[5 pt]
\ge& \int_{B_{\rho'}\setminus B_\rho}\abs{\nabla\tilde{v}_k}^p\,d\Ln+E_k^{q/p-1}r_k^{p-1}\beta\int_{J_{\tilde{v}_k}\cap( B_{\rho'}\setminus B_\rho)}\left(\overline{\tilde{v}_k}^q+\underline{\tilde{v}_k}^q\right)\,d\Hn,\end{split}\]
and then we have \eqref{eq: alpha tilde}. Now let
 $\psi_k=E_k^{1/p}\varphi_k(x/r_k)$ and observe that $\psi_k$ coincides with $u_k$ outside $B_{\rho' r_k}$. %
We get from the local minimality of $u_k$ that
\begin{equation}
\label{eq: finproof1}
\begin{split}
\mathcal{F}(u_k;B_{r_k})\le \mathcal{F}(\psi_k;B_{r_k})&=\mathcal{F}(\psi_k;B_{\rho' r_k})+\beta\int_{\set{u_k\ne \tilde{u}_k}\cap\partial B_{\rho' r_k}}\left(\underline{\psi_k}^{q}+\overline{\psi_k}^{\,q}\right)\,d\Hn\\
&\hphantom{=}+\mathcal{F}(u_k;B_{r_k}\setminus \overline{B_{\rho' r_k}}\,)\\[6 pt]
&\le \mathcal{F}(\psi_k;B_{\rho' r_k}) +2\beta  r_k^{n-1}\Hn(\set{v_k\neq\tilde{v}_k}\cap\partial B_{\rho'})\\[3 pt]
&\hphantom{=}+\mathcal{F}(u_k;B_{r_k}\setminus \overline{B_{\rho' r_k}}\,).
\end{split}
\end{equation}
So, in particular,  we have
\[\begin{split}\mathcal{F}(u_k;B_{\rho'r_k})=&\mathcal{F}(u_k;B_{r_k})-\mathcal{F}(u_k;B_{r_k}\setminus\overline{B_{\rho'rfcfc_k}})-\beta\int_{J_{u_k}\cap\partial B_{\rho'r_k}}\left(\overline{u_k}^q+\underline{u_k}^q\right)\,d\Hn\\[7 pt]\le& \,2\beta  r_k^{n-1}\Hn(\set{v_k\neq\tilde{v}_k}\cap\partial B_{\rho'})+\mathcal{F}(\psi_k;B_{\rho' r_k}).\end{split}\]
Dividing by $\mathcal{F}(u_k;B_{r_k})$ and using \eqref{eq:resto} we get
\[
\alpha_k(\rho')\le R_k +r_k^{p-n}E_k^{-1}\mathcal{F}(\psi_k;B_{\rho'r_k}).
\]
With appropriate rescalings we have
\[\begin{split}
r_k^{p-n}E_k^{-1}\mathcal{F}(\psi_k;B_{\rho' r_k})=& \int_{B_{\rho'}}\abs{\nabla\varphi_k}^p\,d\Ln+ \mathcal{I}_k(\varphi_k)+r_k^p E_k^{-1}\Ln(\set{\varphi_k>0}\cap B_{\rho'}).\end{split}\]
From~\eqref{eq :ipotesi1 lemma} and the definition of $E_k$, we have
\[r_k^p E_k^{-1}\Ln(\set{\varphi_k>0}\cap B_{\rho'})\leq \omega_n r_k^{1/2},\]
and then we get \eqref{eq: alpha k}.

\bigskip
\noindent We \marcomm{\textbf{Step 3:}}want to prove that for every $\varphi\in W^{1,p}(B_1)$ such that $v-\varphi$ is supported on $B_{\rho}$, we have
\begin{equation}
\label{eq: alphafin}
\alpha(\rho')\le\int_{B_\rho}\abs{\nabla\varphi}^p\,d\Ln + C\left[\tilde\alpha(\rho')-\tilde\alpha(\rho)\right] + C\int_{B_{\rho'}\setminus B_\rho}\abs{\nabla\varphi}^p\,d\Ln,
\end{equation}
where $C$ does not depend on either $\rho$ or $\rho'$. From the definition of $\varphi_k$, we have that on $B_\rho$
\[\nabla\varphi_k=\nabla w\]
and on $B_{\rho'}\setminus B_\rho$
\[\nabla\varphi_k=\eta\nabla w+(w+m_k-\tilde{v}_k)\nabla\eta+\nabla\tilde{v}_k(1-\eta),\]
so that
\begin{equation}
\label{eq: phikgrad}
\begin{split}\int_{B_{\rho'}}\abs{\nabla\varphi_k}^p\,d\Ln\leq&\int_{B_\rho}\abs{\nabla w}^p\,d\Ln\\[5 pt]
&+C\left[\int_{B_{\rho'}\setminus B_\rho}\abs{\nabla\tilde{v}_k}^p\,d\Ln+\int_{B_{\rho'}\setminus B_\rho}(\abs{\nabla w}^p+\abs{w+m_k-\tilde{v}_k}^p\abs{\nabla\eta}^p)\,d\Ln\right].\end{split}
\end{equation}
We split the proof into two cases: either 
\begin{equation}\label{eq:caso1}\limsup_{k}E_k>0\end{equation}
or
\begin{equation}\label{eq:caso2}\lim_{k}E_k=0.\end{equation}
Assume~\eqref{eq:caso1} occurs. Notice that $s\le u_k\le 1$ for every $k$, then by definition we have that, for every $k$, $s\le E_k^{1/p} \tilde{v}_k\le 1$ and, since $m_k$ is a median of $v_k$, $0\le E_k^{1/p} m_k\le 1$. In particular we have that
\[\abs{\tilde{v}_k-m_k}\le \frac{2}{E_k^{1/p}},\]
passing to the limit when $k$ goes to infinity we have that
\[\norma{v}_{\infty}\le\liminf_{k}\frac{2}{E_k^{1/p}}<+\infty\quad\Ln\text{-a.e.}\]
then $v$ belongs to $L^\infty(B_1)$ and there exists a positive constant $C$ independent of $k$, and a natural number $\overline{k}$ such that
\[\abs{v+m_k-\tilde{v}_k}\le \frac{C}{E_k^{1/p}}\le \frac{C}{s} \tilde{v}_k\quad\Ln\text{-a.e.}\]
for all $k>\overline{k}$. Let $\varphi\in W^{1,p}(B_1)$ with $v-\varphi$  supported on $B_{\rho}$, and let $w=\varphi$ in the definition of $\varphi_k$, then, for every $k>\overline{k}$,  we have 
\begin{equation}\label{eq:stimavarphik}\abs{\varphi_k}=\abs{\tilde{v}_k+(v+m_k- \tilde{v}_k)\eta}\le C \tilde{v}_k\end{equation}
$\Ln$-a.e. on $B_{\rho'}\setminus B_\rho$. From~\eqref{eq:stimavarphik} we have that
\begin{equation}
\label{eq: I_k}
\mathcal{I}_k(\varphi_k)\le C\mathcal{I}_k(\tilde{v}_k).
\end{equation}
Notice, in addition, that \eqref{eq: phikgrad} reads as
\begin{equation}
    \label{eq: phikgrad caso1}
    \begin{split}\int_{B_{\rho'}}\abs{\nabla\varphi_k}^p\,d\Ln\leq&\int_{B_\rho}\abs{\nabla \varphi}^p\,d\Ln\\[5 pt]
&+C\int_{B_{\rho'}\setminus B_\rho}\abs{\nabla\tilde{v}_k}^p\,d\Ln+C\int_{B_{\rho'}\setminus B_\rho}\abs{\nabla \varphi}^p\,d\Ln+ R_k.\end{split}
\end{equation}
finally joining ~\eqref{eq: alpha k},~\eqref{eq: phikgrad caso1},~\eqref{eq: I_k}, and~\eqref{eq: alpha tilde}, we have
\[\alpha_k(\rho')\le \int_{B_\rho}\abs{\nabla\varphi}^p\,d\Ln+C\left[\tilde{\alpha}_k(\rho')-\tilde{\alpha}_k(\rho)\right]+C\int_{B_{\rho'}\setminus B_\rho} \abs{\nabla \varphi}^p\,d\Ln+R_k.\]
Letting $k$ go to infinity we get~\eqref{eq: alphafin}.
\medskip 

Suppose now that \eqref{eq:caso2} occurs. The functions $\abs{\tilde{v}_k-m_k}^p$, $\abs{v}^p$ are uniformly integrable, namely for every $\varepsilon>0$ there exists a $\sigma=\sigma_\varepsilon<\varepsilon$ such that if $A$ is a measurable set with $\abs{A}<\sigma$, then
\begin{equation}
\label{eq: unifint}
\int_{A}\abs{\tilde{v}_k-m_k}^p\,d\Ln+\int_{A}\abs{v}^p\,d\Ln<\varepsilon.
\end{equation}
Since $v\in L^p(B_1)$, we can find $M>1/\varepsilon$ such that 
\begin{equation}
\label{eq: measure}
\abs{\set{\abs{v}>M}}<\sigma.
\end{equation}
Setting $w=\varphi_M=\max\set{-M,\min\set{\varphi,M}}$, then \eqref{eq: phikgrad} reads as
\begin{equation}
\label{eq: phikgrad caso2}
\begin{split}\int_{B_{\rho'}}\abs{\nabla\varphi_k}^p\,d\Ln\leq&\int_{B_\rho\cap\set{\abs{\varphi}\le M}}\abs{\nabla \varphi}^p\,d\Ln+ C\int_{\left(B_{\rho'}\setminus B_\rho\right)\cap\set{\abs{\varphi}\le M}}\abs{\nabla \varphi}^p\,d\Ln\\[5 pt]
&+C\left[\int_{B_{\rho'}\setminus B_\rho}\abs{\nabla\tilde{v}_k}^p\,d\Ln+\int_{B_{\rho'}\setminus B_\rho}\abs{\varphi_M+m_k-\tilde{v}_k}^p\abs{\nabla\eta}^p\,d\Ln\right].\end{split}
\end{equation}
We can estimate the last integral as follows
\begin{equation}
\label{eq: infinitesimalgrad}
\begin{split}
\int_{B_{\rho'}\setminus B_\rho}\abs{\varphi_M+m_k-\tilde{v}_k}^p\abs{\nabla\eta}^p\,d\Ln &\le C\eps + \int_{\left(B_{\rho'}\setminus B_\rho\right)\cap\set{\abs{v}\le M}}\abs{v+m_k-\tilde{v}_k}^p\abs{\nabla\eta}^p\,d\Ln].\\[7 pt]
&=C\eps+R_k,
\end{split}
\end{equation}
where we used \eqref{eq: measure} and \eqref{eq: unifint}, and $C$ only depends on $\rho$ and $\rho'$. Furthermore, we have
\begin{equation}
    \label{eq: jumpestim caso2}
    \mathcal{I}_k(\varphi_k)\le R_k + C\mathcal{I}_k(\tilde{v}_k).
\end{equation}
Indeed, as before, $\abs{\tilde{v}_k-m_k}\le C \tilde{v}_k$, while 
\[
\begin{split}
E_k^{q/p-1}r_k^{p-1}\int_{J_{\tilde{v}_k}\cap\left(B_{\rho'}\setminus B_\rho\right)}\abs{\varphi_M}^q\,d\Hn&\le M^q E_k^{q/p-1}r_k^{p-1}\Hn\left({J_{\tilde{v}_k}\cap\left(B_{\rho'}\setminus B_\rho\right)}\right)\\
&\le M^q E_k^{\frac{q}{p}}\frac{r_k^{p-1}\varepsilon_k}{E_k}\\
&\le \frac{M^q}{s^q}E_k^{\frac{q}{p}},
\end{split}
\]
which goes to 0 as $k\to\infty$. Finally, joining \eqref{eq: alpha k}, \eqref{eq: phikgrad caso2}, \eqref{eq: infinitesimalgrad}, \eqref{eq: jumpestim caso2}, and \eqref{eq: alpha tilde}, we have
\[
\alpha_k(\rho')\le R_k + \int_{B_\rho\cap\set{\abs{\varphi}\le M}}\abs{\nabla \varphi}^p+C\left[\tilde\alpha(\rho')-\tilde\alpha(\rho)\right]+C\int_{\left(B_{\rho'}\setminus B_\rho\right)\cap\set{\abs{\varphi}\le M}}\abs{\nabla \varphi}^p\,d\Ln+C\eps.
\]
Taking the limit as $k$ tends to infinity, and then the limit as $\eps$ tends to 0, we get \eqref{eq: alphafin}.

\bigskip
\noindent We are now in a position to prove that $v$ is $p$-harmonic: taking the limit as $\rho'$ tends to $\rho$ in \eqref{eq: alphafin}, we have that if $\varphi\in W^{1,p}(B_1)$, with $v-\varphi$ supported on $B_\rho$, 
\[\int_{B_{\rho}}\abs{\nabla v}^p\,d\Ln\leq\alpha(\rho)\leq\int_{B_{\rho}}\abs{\nabla\varphi}^p\,d\Ln,\]
for every $\rho\in I$, therefore $v$ is $p$-harmonic in $B_1$. Notice that this implies that $v$ is a locally Lipschitz function (see \cite[Theorem 7.12]{bv})
.  Moreover, if $\varphi=v$, we have
\[\int_{B_{\rho}}\abs{\nabla v}^p\,d\Ln=\alpha(\rho)\]
for every $\rho\in I$, so that $\alpha$ is continuous on the whole interval $[0,1]$, $\alpha(1)=1$ and $\alpha(\tau)=\lim_k\alpha_k(\tau)\geq \tau^{n-1/2}$. Nevertheless, if $\tau$ is sufficiently small this contradicts the fact that $v$ is locally Lipschitz, since
\[
\tau^{n-\frac{1}{2}}\le\int_{B_{\tau}}\abs{\nabla v}^p\,d\Ln\le C\, \tau^{n}, 
\]
where $C$ is a positive constant depending only on $n$ and $p$.
\end{proof}
\begin{proof}[Proof of \autoref{teor: main2}] Let $u$ be a minimizer for the problem~\eqref{problema}. By \autoref{cor: density0} there exist two positive constants $C(\Omega,\beta,p,q), C_1(\Omega,\beta,p,q)$ such that if $B_r(x)\subseteq\R^n\setminus\Omega$, then
\[
\Hn(J_u\cap B_r(x))\leq C(\Omega,\beta,p,q)r^{n-1},
\]
and if $x\in K_u$
\[\Ln(B_r(x)\cap\set{u>0})\ge C_1(\Omega,\beta,p,q) r^n.\]
We now prove that there exists a positive constant $c=c(\Omega,\beta,p,q)$ such that 
\begin{equation}\label{eq: bound densità sotto}\Hn(J_u\cap B_r(x))\ge c(\Omega,\beta,p,q) r^{n-1}\end{equation}
for every $x\in K_u$ and $B_r(x)\subset\R^n\setminus\Omega$. Assume by contradiction that there exists $x\in J_u$ such that, for $r>0$ small enough,
\[\Hn\left(J_u\cap B_r(x)\right)\le\eps_0 r^{n-1},\]
where $\eps_0$ is the one in \autoref{lem: density}. Iterating \autoref{lem: density} it can be proven (see \cite[Theorem 5.1]{CK}) that
\[\lim_{r\to0^+} r^{1-n}\mathcal{F}(u;B_r)=0,\]
which, in particular, implies
\begin{equation}\label{not in j_u}\lim_{r\to0^+} r^{1-n}\left[\int_{B_r(x)}\abs{\nabla u}^p\,d\Ln+\Hn\left(J_u\cap B_r(x)\right)\right]=0.\end{equation}
By \cite[Theorem 3.6]{Giorgi}, \eqref{not in j_u} implies that $x\notin J_u$, which is a contradiction. Finally, if $x\in K_u$ and
\[\Hn(J_u\cap B_{2r}(x))\leq \eps_0 r^{n-1},\]
there exists $y\in J_u\cap B_r(x)$ such that
\[\Hn\left(J_u\cap B_r(y)\right)\le\eps_0 r^{n-1}\]
which, again, is a contradiction. Then the assertion is proved. The density estimate~\eqref{eq: bound densità sotto} implies in particular that 
\[K_u\subset\Set{x\in\R^n| \limsup_{r\to0^+}\,r^{1-n}\left[\int_{B_r(x)}\abs{\nabla u}^p\,d\Ln+\Hn\left(J_u\cap B_r(x)\right)\right]>0},\]
hence $\Hn(K_u\setminus J_u)=0$ (see for instance \cite[Lemma 2.6]{Giorgi}).
\end{proof}
\begin{oss}
Let $u$ be a minimizer for problem \eqref{problema}, then from  \autoref{teor: lower-bound} we have that the function $u^*=(\beta \delta^q)^{-1/p}u$ is an almost-quasi minimizer for the Mumford-Shah functional
\[MS(v)=\int_{\R^n}\abs{\nabla v}^p\,d\Ln+\Hn(J_v)\]
with the Dirichlet condition $u^*=(\beta \delta^q)^{-1/p}$ on $\Omega$. If $\Omega$ is sufficiently smooth we can apply the results in \cite{BucGia15} to have that the density estimate for the jump set of minimizers holds up to the boundary of $\Omega$.
\end{oss}
\newpage
\nocite{*}
\printbibliography[heading=bibintoc]
\newpage
\Addresses
\end{document}